\newcommand{\n}{\mathbb{N}}
\newcommand {\<}{\left\langle}  
\renewcommand {\>}{\right\rangle}  
\newcommand {\norma}[1]{\left\|#1\right\|}
\newcommand{\ew}{\mathbb{E}}
\newcommand{\pr}{\mathbb{P}}
\newcommand{\wlim}{w^*\mbox{-}\lim}
\newcommand{\wcl}{w^*\mbox{-}\operatorname{cl}}
\newcommand{\olambda}{\bar{\lambda}}
\newcommand{\ulambda}{\underline{\lambda}}
\newcommand{\hlambda}{\hat{\lambda}}
\newcommand{\tG}{\widetilde{G}}
\newcommand{\tW}{\widetilde{W}}
\newcommand{\dpsi}{D_{\psi}}
\theoremstyle{thmstyleone}%
\newtheorem{theorem}{Theorem}[section]
\newtheorem{proposition}[theorem]{Proposition}%
\newtheorem{lemma}[theorem]{Lemma}
\newtheorem{corollary}[theorem]{Corollary}
\theoremstyle{thmstyletwo}%
\newtheorem{remark}[theorem]{Remark}%
\theoremstyle{thmstylethree}%
\newtheorem{assumption}[theorem]{Assumption}
\numberwithin{equation}{section}
\begin{document}

\title[On the Existence and Uniqueness of Stationary Distributions for Some PDMPs]{On the Existence and Uniqueness of Stationary Distributions for Some Piecewise Deterministic Markov Processes with State-Dependent Jump Intensity}


\author*{\fnm{Dawid} \sur{Czapla} \orcidlink{0000-0002-0562-773X}
}\email{dawid.czapla@us.edu.pl}

\affil{\orgdiv{Institute of Mathematics}, \orgname{University of Silesia in Katowice}, \orgaddress{\street{Bankowa~14}, \city{Katowice}, \postcode{40-007}, \country{Poland}}}


\abstract{In this paper, we consider a subclass of piecewise deterministic Markov processes with a~Polish state space that involve a deterministic motion punctuated by random jumps, occurring in a Poisson-like fashion with some state-dependent rate, between which the trajectory is driven by one of the given semiflows. We prove that there is a one-to-one correspondence between stationary distributions of such processes and those of the Markov chains given by their post-jump locations. Using this result, we further establish a criterion guaranteeing the existence and uniqueness of the stationary distribution in a particular case, where the post-jump locations result from the action of a random iterated function system with an arbitrary set of transformations.}

\keywords{Piecewise deterministic Markov process, invariant measure, one-to-one correspondence, state-dependent jump intensity, switching semiflows}


\pacs[MSC Classification]{Primary: 60J25, 60J05; Secondary: 60J35, 37A30}

\maketitle


\section{Introduction}
Let $Y$ be a Polish metric space, and suppose that we are given a finite collection \hbox{$\{S_i:\, i\in I\}$} of continuous semiflows acting from $\mathbb{R}_+ \times Y$ to~$Y$, where $\mathbb{R}_+:=[0,\infty)$. The object of our study will be a~piecewise deterministic Markov process (PDMP) $\Psi:=\{(Y(t),\xi(t))\}_{t\in\mathbb{R}_+}$, evolving on the space $X:=Y\times I$ in such a way that \vspace{-0.2cm}
\begin{equation}
\label{def:pdmp}
Y(t)=S_{\xi_n}(t-\tau_n,Y_n),\quad \xi(t)=\xi_n\quad\text{whenever}\quad t\in [\tau_n,\tau_{n+1})\;\;\text{for}\;\;n\in\n_0.\vspace*{-0.1cm}
\end{equation}
Here, $\Phi:=\{(Y_n,\xi_n)\}_{n\in\n_0}$ stands for a given Markov chain describing the post-jump locations of $\Psi$, and $\{\tau_n\}_{n\in\n_0}$ is an almost surely (a.s.) increasing to infinity sequence of non-negative random variables, representing the jump times, such that the inter-jump times  \hbox{$\Delta \tau_{n+1}:=\tau_{n+1}-\tau_{n}$}, $n\in\n_0$, are independent and satisfy\vspace{-0.1cm}
$$\pr(\Delta\tau_{n+1}\leq t\,|\,\Phi_n=(y,i))=1-\exp\left(-\int_0^t \lambda(S_i(h,y))dh\right)\;\;\text{for all}\;\; t\in\mathbb{R}_+,\; (y,i)\in X,\vspace{-0.1cm}$$ 
with a given bounded continuous function $\lambda:Y\to (0,\infty)$, for which $\inf_{y\in Y}\lambda(y)>0$. The~transition law of $\Phi$, further denoted by $P$, will be defined using an arbitrary stochastic kernel $J$ on $Y$ and a~state-dependent stochastic matrix $\{\pi_{ij}:\,i,j\in I\}$, consisting of continuous functions from~$Y$ to~$[0,1]$, so that $J(y,B)$ is the probability of entering a Borel set $B\subset Y$ by~$Y_n$ given~$Y(\tau_n-)=y$, while $\pi_{ij}(y)$ specifies the probability of transition from $\xi_{n-1}=i$ to $\xi_n=j$ given $Y_n=y$.

The main goal of the paper is to prove that there is a one-to-one correspondence between the families of stationary (invariant) distributions of the process $\Psi$ and the chain $\Phi$, assuming that the kernel $J$ enjoys a strengthened form of the Feller property (Assumption \ref{cnd:f}). This result \hbox{(i.e., Theorem~\ref{thm:main})} generalizes \hbox{\cite[Theorem 5.1]{b:czapla_exp_erg}}, which has been established in the case where $\Delta\tau_n$ are exponentially distributed with a constant rate~$\lambda$, and $\pi_{ij}$ are constant. The latter, in turn, was intended to extend the scope of \hbox{\cite[Theorem 4.4]{b:czapla_erg}}, which refers to a model with a specific form of the kernel $J$.

Originally, PDMPs were introduced by Davis in \cite{b:davis} (see also \cite{b:davis_book}) as a general class of non-diffusive stochastic processes that combine a deterministic motion and random jumps. In this traditional framework, such processes evolve on the union of a countable indexed family of open sets in Euclidean spaces. Associated with each component of the state space is a flow generated by a locally Lipschitz continuous vector field, determining the trajectory's evolution within this component between the jumps. The process $\Psi$, investigated here, is defined in a similar manner, but it evolves on a~general Polish space, and the semiflows are not generated by vector fields but given \hbox{\emph{a priori}}. At~the same time, let us emphasize that our model does not involve the so-called active boundaries (forcing the jumps), which occur in  \cite{b:davis, b:davis_book}. Nevertheless, everything indicates that the techniques employed in this paper should also prove effective in their presence, and it is highly likely that we will present such results in another article.

For the classical PDMPs, analogous results to what we aim to achieve here were established in \hbox{\cite[Theorems 1--3]{b:costa0}} and \hbox{\cite[Ch.~34]{b:davis_book}}. Their proofs are based on the concept of extended generator of a Markov semigroup (see \hbox{\cite[Definition 14.15]{b:davis_book}}), defined as an \hbox{extension} of its strong generator (see \cite[Ch. 1]{b:ethier}) using a martingale \hbox{interpretation} of the~Dynkin identity. Specifically, the key observation (resulting from \hbox{\cite[Proposition 34.7]{b:davis_book}}) is as follows: If, for some finite Borel measure~$\mu$, the extended (or strong) generator $\mathfrak{A}$ of a~given Markov semigroup satisfies
\begin{equation}
\label{e:generator}
\int  \mathfrak{A} f\,d\mu=0\quad\text{for all}\;\; f\in D(\mathfrak{A}),
\end{equation}
with $D(\mathfrak{A})$ standing for the domain of $\mathfrak{A}$, then $\mu$ is invariant for this semigroup, \hbox{provided} that $D(\mathfrak{A})$ intersected with the space of bounded Borel measurable functions separates probability measures. On the other hand, Davis (in \hbox{\cite[Theorem 5.5]{b:davis}}; cf. also \hbox{\cite[Theorem~26.14]{b:davis_book}}) has completely characterized the extended generators of classical PDMPs and shown that their domains enjoy the separating property (see \hbox{\cite[Proposition 34.11]{b:davis_book}}), which enabled the use of the aforementioned fact in the reasoning presented in \cite{b:davis_book, b:costa0}. In our framework, establishing such a property would be rather challenging, if not impossible, as the state space is not Euclidean, and the deterministic dynamics do not correspond to systems of differential equations, which is essential for the discussion in \hbox{\cite[\S 31--32]{b:davis_book}}, leading to \hbox{\cite[Proposition 34.11]{b:davis_book}}. 

A result similar to those discussed above, but in a much more simple setting, where the state space is compact, and randomness of the examined process arises solely from the switching between semiflows, which occur with a constant intensity, was established in \cite[Proposition 2.4]{b:benaim1}. When proving this proposition, the authors utilize the compactness of the state space to show that the Markov semigroup under consideration (which enjoys the Feller property) is strongly continuous on the space of (bounded) continuous functions. According to the Hille--Yosida theorem (\hbox{\cite[Theorem 1.2.6]{b:ethier}}), this guarantees that the domain of its strong generator is dense in such a space and, therefore, separates the measures. Obviously, in our framework, this argument fails due to the lack of compactness.

For the purposes of our study, we employ the concept of weak generator (\hbox{\cite{b:dynkin_1965, b:dynkin_2000}}), similarly to the approach in \cite{b:czapla_erg}. This choice enable us to use an argument resembling the one utilized in \cite{b:benaim1}, but in the context of the \emph{weak star} ($w^*$-)topology. More precisely, by showing that the transition semigroup of $\Psi$ is Feller (which follows from the corresponding property of $J$) and continuous in the weak sense on the space of bounded continuous functions, one can conclude that the \hbox{$w^*$-closure} of the domain of its weak generator contains that space. This, in turn, allows one to argue the \hbox{$\Psi$-invariance of measures} by verifying \eqref{e:generator} for such a generator. Compared to the results of \cite{b:czapla_exp_erg, b:czapla_erg}, the primary contribution of the present paper lies in Lemmas \ref{lem:main1} and~\ref{lem:tech2}, which allow overcoming the difficulty arising from the fact that the jump intensity $\lambda$ is not constant. In particular, the latter enables us to establish (in Lemma~\ref{lem:Pt_prop}) a suitable generalization of \hbox{\cite[Lemma 5.1]{b:czapla_exp_erg}}, which underlies the arguments used to prove \hbox{\cite[Theorem 5.1]{b:czapla_exp_erg}} (itself based on the proof of \hbox{\cite[Theorem 4.4]{b:czapla_erg}}).

We finalize the paper by applying our main result to a special case of the process~$\Psi$, where $J$ is the transition law of the Markov chain arising from a random iterated functions system involving an arbitrary family  $\{w_{\theta}:\,\theta\in\Theta\}$ of continuous transformations from $Y$ into itself (see, e.g., \cite{b:kapica}). Specifically, we provide a set of user-friendly conditions on the component functions guaranteeing that $\Psi$ has a unique stationary distribution with \hbox{finite} first moment (see Theorem \ref{thm:app}), which leads to a generalization of \hbox{\cite[Corollary 4.5]{b:czapla_erg}} (cf.~also \hbox{\cite[Theorem 5.3.1]{b:horbacz}}). In an upcoming paper,  under similar assumptions, we also plan to prove the exponential ergodicity of~$\Psi$ in the bounded Lipschitz distance (and thus generalize~\hbox{\cite[Proposition 7.2]{b:czapla_exp_erg}}). 

Eventually, it is worth to emphasize that considering the jumps occurring with a~state-dependent intensity is often significant in applications. For example, the above-mentioned special case of $\Psi$ with $E=\mathbb{R}_+$, $\Theta$ being an compact interval, and \hbox{$w_{\theta}(y)=y+\theta$} proves to be useful in analysing the stochastic dynamics of gene expression in the presence of transcriptional bursting (see, e.g., \cite{b:tyran} or~\hbox{\cite[\S 5.1]{b:czapla_erg}}). In short, $\{Y(t)\}_{t\in\mathbb{R}_+}$ then describes the concentration of a protein encoded by some gene of a prokaryotic cell. The protein molecules undergo a degradation process, which is interrupted by production appearing in the so-called bursts at random times~$\tau_n$. From a~biological point of view, it is known that the intensity of these bursts depends on the current number of molecules, and thus taking into account the non-constancy of~$\lambda$ makes the model more accurate. 

The outline of the paper is as follows. In Section \ref{sec:prel}, we introduce notation and review several basic concepts related to Markov processes and weak generators of contraction semigroups. Section~\ref{sec:def} presents a formal construction of the model under consideration. The~main result is formulated in Section \ref{sec:main}. Here, we also provide a simple observation regarding the finiteness of the first moments of the invariant measures under consideration. The proof of the main theorem, along with the statements of all necessary auxiliary facts, is given in Section~\ref{sec:proof}. The latter are proved in Section~\ref{sec:proof_aux}. Finally, Section~\ref{sec:application} discusses an application of the main result to the aforementioned specific case of $\Psi$.

\section{Preliminaries} \label{sec:prel}
Consider an arbitrary metric space $E$, and let $\mathcal{B}(E)$ stand for its Borel $\sigma$-field. By $B_b(E)$ we will denote the space of all bounded Borel measurable functions from $E$ to~$\mathbb{R}$, endowed with the supremum norm, i.e., $\norma{f}_{\infty}:=\sup_{x\in E} |f(x)|$ for $f\in B_b(E)$, whilst by $C_b(E)$ we will mean the subspace of $B_b(E)$ consisting of all continuous functions. Further, let $\mathcal{M}_{sig}(E)$ be the family of all finite signed Borel measures on~$E$ (that is, all  \hbox{$\sigma$-additive} real-valued set functions on $\mathcal{B}(E)$), and let $\mathcal{M}(E)$, $\mathcal{M}_1(E)$ stand for its subsets containing all non-negative measures and all probability measures, respectively. Additionally, for any given Borel measurable function $V:E\to [0,\infty)$, the symbol $\mathcal{M}_1^V(E)$ will denote the set of all $\mu\in\mathcal{M}_1(E)$ with finite first moment w.r.t.~$V$, i.e., such that $\int_E V\,d\mu<\infty$. Moreover, for notational brevity, given $f\in B_b(E)$ and $\mu\in\mathcal{M}(E)$, we will often write $\<f,\mu\>$ for the Lebesgue integral $\int_E f \,d\mu$.

\subsection{Markov processes and their transition laws}\label{sec:markov}
Let us now recall several basic concepts from the theory of Markov processes, which we will refer to throughout the paper.

A function $K: E\times\mathcal{B}(E)\to [0,\infty]$ is said to be a \emph{(transition) kernel} on $E$ whenever, for every $A\in\mathcal{B}(E)$, the map \hbox{$E\ni x \mapsto K(x,A)$} is Borel measurable, and, for every \hbox{$x\in E$},  $\mathcal{B}(E)\ni A \mapsto K(x,A)$ is a non-negative Borel measure. If, additionally, $\sup_{x\in E} K(x,E)<\infty$, then $K$ is said to be \emph{bounded}. In the case where $K(x,E)=1$ for every $x\in E$, $K$ is called a \emph{stochastic} kernel (or a \emph{Markov} kernel) and usually denoted by $P$ rather than $K$.

For any two kernels $K_1$ and $K_2$, we can consider their \emph{composition} $K_1K_2$ of the form
$$
K_1K_2(x,A):=\int_E K_2(y,A)K_1(x,dy)\quad\text{for}\quad x\in E,\; A\in\mathcal{B}(E).$$
The iterates of a kernel $K$ are defined as usual by $K^1:=K$ and $K^{n+1}:=K K^n$ for every $n\in\n$. Obviously, the composition of any two bounded kernels is again bounded.

Given a kernel $K$ on $E$, for any non-negative Borel measure $\mu$ on $E$ and any bounded below Borel measurable function $f:E\to\mathbb{R}$, we can consider the measure $\mu K$ and the function $Kf$ defined as
\begin{gather}
\label{e:mK} \mu K(A):=\int_E K(x,A)\mu(dx)\;\;\text{for}\;\;A\in\mathcal{B}(E),\\
\label{e:Kf} Kf(x):=\int_E f(y)\,K(x,dy)\;\;\text{for}\;\;x\in E,
\end{gather}
respectively. They are related to each other in such a way that $\<f, \mu K\>=\<Kf,\mu\>$. Obviously, if $K$ is bounded, then the operator $\mu \mapsto \mu K$ transforms $\mathcal{M}(E)$ into itself, whilst $f\mapsto Kf$ maps $B_b(E)$ into itself. In the case where $K$ is stochastic, the operator defined by~\eqref{e:mK} also leaves the set $\mathcal{M}_1(X)$ invariant.

Let us stress here that the notation consistent with \eqref{e:mK} and \eqref{e:Kf} will be used for all the kernels considered in the paper, without further emphasis.

A kernel $K$ on $E$ is said to be \emph{Feller} if $Kf\in C_b(E)$ for every function $f\in C_b(E)$. Furthermore, a~non-negative Borel measure $\mu$ is called \emph{invariant} for a kernel~$K$ (or for the operator on measures induced by this kernel) whenever $\mu K=\mu$. These two concepts can also be used in reference to any family of transition kernels. The family of this kind is said to be Feller if all its members are Feller. A measure $\mu$ is called invariant for such a family if~$\mu$ is invariant for each of its members.

For a given $E$-valued time-homogeneous Markov chain $\Phi=\{\Phi_n\}_{n\in\n_0}$, defined on a probability space $(\Omega,\mathcal{F},\pr)$, a stochastic kernel $P$ on $E$ is called the \emph{transition law} of this chain~if
$$\pr(\Phi_{n+1}\in A\,|\,\Phi_n=x)=P(x,A)\quad\text{for all}\quad x\in E,\; A\in\mathcal{B}(E),\;n\in\n_0.$$
Letting $\mu_n$ denote the distribution of $\Phi_n$ for each $n\in\n_0$, we then have $\mu_{n+1}=\mu_n P$ for every \hbox{$n\in\n_0$}. The operator $(\cdot)P$ on $\mathcal{M}_1(E)$ is thus referred to as the \emph{transition operator} of~$\Phi$, and  the invariant probability measures of $P$ are simply the \emph{stationary distributions} of $\Phi$. Moreover, in fact, 
$\pr(\Phi_{n+k} \in A \,|\, \Phi_k=x)=P^n(x,A)$ for any $x\in E$, $A\in\mathcal{B}(E)$ and $k,n\in\n_0$, which implies that
$$\ew_x[f(\Phi_n)]=P^n f(x) \quad\text{for any}\quad x\in E,\; f\in B_b(E),\;n\in\n_0,$$
where $\ew_x$ stands for the expectation with respect to $\pr_x:=\pr(\cdot\,|\,\Phi_0=x)$.

On the other hand, it is well-known (see, e.g., \cite{b:revuz}) that, for every stochastic kernel $P$ on~a separable metric space $E$, there exists an $E$-valued Markov chain with transition law $P$. More precisely, putting $\Omega:=E^{\mathbb{N}_0}$, \hbox{$\mathcal{F}=\mathcal{B}(E^{\mathbb{N}_0})$} and, for each $n\in\n_0$, defining $\Phi_n:\Omega\to E$~as
\begin{equation}
\label{def:phi_n}
\Phi_n(\omega):=x_n\quad\text{for}\quad \omega=(x_0,x_1,\ldots)\in E,
\end{equation}
one can show that, for every $x\in E$, there exists a probability measure $\mathbb{P}_x$ on $\mathcal{F}$ such that, for any $n\in\n_0$, $A_0,\ldots,A_n\in\mathcal{B}(E)$ and $F:=\{\Phi_0\in A_0,\ldots,\Phi_n\in A_n\}$, we have
\begin{align}
\begin{split}
\label{e:canonical}
\mathbb{P}_x(F)=&\mathbbm{1}_{A_0}(x)\int_{E}\int_E\ldots \int_{E}\mathbbm{1}_{A_1\times\ldots\times A_n}(x_1,\ldots,x_n)P(x_{n-1},dx_n)\ldots\\
&  P(x_1,dx_2)P(x,dx_1).
\end{split}
\end{align}
Then $\{\Phi_n\}_{n\in\n_0}$ specified by \eqref{def:phi_n} is a time-homogeneous Markov chain on $(\Omega,\mathcal{F},\pr_x)$ with initial state $x$ and transition law $P$. The Markov chain constructed in this way is called the \emph{canonical} one.

Finally, recall that a family $\{P_t\}_{t\in\mathbb{R}_+}$ of stochastic kernels on~$E$ (or the corresponding family of operators on $\mathcal{M}_1(X)$ or $B_b(X)$) is called a \emph{Markov transition semigroup} whenever $P_{s+t}=P_sP_t$ for all $s,t\geq 0$ and $P_0(x,\cdot)=\delta_x$ for every $x\in E$, where $\delta_x$ stands for the Dirac measure at~$x$. While using this term in the context of a~time-homogeneous Markov process $\Psi=\{\Psi(t)\}_{t\geq 0}$, defined on some probability space $(\Omega,\mathcal{F},\pr)$, we will mean that
$$\pr(\Psi(s+t)\in A\,|\, \Psi(s)=x)=P_t(x,A) \quad\text{for all}\quad x\in E,\; A\in\mathcal{B}(E),\;s,t\geq 0.$$
Clearly, if $\mu_t$ denotes the distribution of $\Psi(t)$ for every $t\geq 0$, then $\mu_{s+t}=\mu_s P_t$ for any $s,t\geq 0$, which, in particular, shows that the invariant probability measures of $\{P_t\}_{t\in\mathbb{R}_+}$ are, in fact, the stationary distributions of the process $\Psi$. Moreover, analogously to the discrete case, we have
\begin{equation}
\label{e:efp}
\ew_x[f(\Psi(t))]=P_t f(x) \quad\text{for any}\quad x\in E,\; f\in B_b(E),\;t\geq 0,
\end{equation}
where $\ew_x$ stands for the expectation with respect to $\pr_x:=\pr(\cdot\,|\,\Psi(0)=x)$.

\subsection{Weak infinitesimal generators}\label{sec:generators}

As mentioned in the introduction, we shall use certain tools relying on the concept of a weak infinitesimal generator, which generally pertains to contraction semigroups on subspaces of Banach spaces. In our study, we adopt this concept from \cite{ b:dynkin_1965, b:dynkin_2000}.

Let us first recall that, given a normed space $(L, \norma{\cdot}_L)$, a~family $\{H_t\}_{t\in\mathbb{R}_+}$ of linear operators from $L$ to itself is called a \emph{contraction semigroup on $L$} if $H_0=\operatorname{id}_L$, $H_{s+t}=H_s\circ H_t$ for any $s,t\geq 0$,  and $\norma{H_t f}_L\leq \norma{f}_L$ for all $t\geq 0$ and $f\in L$. 

In what follows, we will only focus on the case where $L$ is a subspace of $(B_b(E), \norma{\cdot}_{\infty})$. Note that any Markov transition semigroup, regarded as a family of operators on~$B_b(E)$ is a contraction semigroup of linear operators. Obviously, if such a semigroup enjoys the Feller property, then it forms a semigroup on $C_b(E)$.

To introduce the notion of the weak generator (adapted to our purposes), let us first consider the Banach space $(\mathcal{M}_{sig}(E), \norma{\cdot}_{TV})$ with the total variation norm, defined by
$$\norma{\mu}_{TV}:=\sup\{|\<f,\mu\>|:\,f\in B_b(E),\;\norma{f}_{\infty}\leq 1\}\quad\text{for}\quad\mu\in\mathcal{M}_{sig}(E),$$
and let $\mathcal{M}_{sig}(E)^*$ denote its dual space. Further, for every $f\in B_b(E)$, define the bounded linear functional $\ell_f:\mathcal{M}_{sig}(E)\to\mathbb{R}$ by
$$\ell_f(\mu):=\<f,\mu\>\quad\text{for}\quad\mu\in\mathcal{M}_{sig}(E).$$
Then $f\mapsto \ell_f$ is an isometric embedding of $(B_b(E), \norma{\cdot}_{\infty})$ in $\mathcal{M}_{sig}(E)^*$ (see \hbox{\cite[p. 50]{b:dynkin_1965}}), i.e., an injective linear map satisfying $\norma{\ell_f}=\norma{f}_{\infty}$ for all $f\in B_b(E)$, where $\norma{\cdot}$ denotes the operator norm. Therefore,~$B_b(E)$ can be regarded as a subspace of $\mathcal{M}_{sig}(E)^*$, and thus it can be endowed with the weak $w^*$-topology inherited from the latter. Moreover, it is easy to check that $B_b(E)$ is \hbox{$w^*$-closed} in~$\mathcal{M}_{sig}(E)^*$. 

In view of the above, a sequence $\{f_n\}_{n\in\n}\subset B_b(E)$ is said to be weak$^*$ convergent to~\hbox{$f\in B_b(E)$}, which is written as $\wlim_{n\to \infty} f_n=f$, whenever $\{\ell_{f_n}\}_{n\in\n}$ converges weakly$^*$ to $\ell_f$ in $\mathcal{M}_{sig}(E)^*$, i.e.,

\begin{equation} \label{wlimit} \wlim_{n\to \infty} f_n=f \;\;\;\mbox{iff}\;\;\;\lim_{n\to\infty}\<f_n,\mu\>=\<f,\mu\>\;\;\;\mbox{for all}\;\;\;\mu\in\mathcal{M}_{sig}(E).
\end{equation}
On the other hand, it is well-known that \eqref{wlimit} is equivalent to the pointwise convergence of $\{f_n\}_{n\in\n}$ to $f$ in conjunction with the boundedness of the sequence $
\{\norma{f_n}_{\infty} \}_{n\in\n}$.

Given a subspace $L$ of $B_b(E)\subset \mathcal{M}_{sig}(E)^*$ and a contraction semigroup $\{H_t\}_{t\in\mathbb{R}_+}$ of linear operators on $L$, by \emph{the~weak (infinitesimal) generator} of this semigroup we will mean (following \cite[Ch.1\,\S\,6]{b:dynkin_1965}) the operator $A_H:D(A_H)\to L_{0,H}$ given by
$$A_H f=\wlim_{t\to 0}\frac{H_t f -f}{t}\quad\text{for}\quad f\in D(A_H),$$
where
$$D(A_H):=\left\{f\in L:\, \wlim_{t\to 0}\frac{H_t f -f}{t}\;\;\text{exists and belongs to}\;\;L_{0,H}\right\},$$
while $L_{0,H}$ denotes the center of $\{H_t\}_{t\geq 0}$, i.e.
\begin{equation}\label{defL0} 
L_{0,H}:=\{f\in L:\,\wlim_{t\to 0} H_t f =f\}.
\end{equation}

At the end of this section, let us quote several basic properties of weak generators that will be useful in the further course of the paper.
\begin{remark}[see \text{\cite[p. 40]{b:dynkin_1965} or \cite[pp. 437-448]{b:dynkin_2000}}]\label{rem:inf}
Let $\{H_t\}_{t\geq 0}$ be a contraction semigroup of linear operators on a subspace $L$ of $B_b(E)$, and let $A_H$ stand for the weak generator of this semigroup. Then
\begin{enumerate}[label=(\roman*), leftmargin=*, widest=ii]
\item\label{rem:i}$\wcl D(A_H) =\wcl L_{0,H},$ where $\wcl$ denotes the weak-$*$ closure in $B_b(E)$.
\item\label{rem:ii} For every $f\in D(A_H)$ the derivative 
$$\mathbb{R}_+\ni t\mapsto \frac{d^+ H_t f}{dt}:=\wlim_{h\to 0^+} \frac{H_{t+h} f - H_t f}{h}$$
exists (in $L_{0,H}$) and is $*$-weak continuous from the right. Consequently, 
$$H_t f\in D(A_H)\quad\text{and}\quad\frac{d_+ H_t f}{dt}=A_H H_t f =H_t A_H f\quad\text{for all}\ \quad t\geq 0.$$
Moreover, the Dynkin formula holds, i.e.,
$$H_t f-f=\int_0^t H_s A_Hf\,ds\quad\text{for all}\ \quad t\geq 0.$$
\end{enumerate}
\end{remark}
 

\section{Definition of the model}\label{sec:def}
Let $(Y,\rho_Y)$ be a non-empty complete separable metric space, and let $I$ stand for an arbitrary non-empty finite set endowed with the discrete topology. Moreover, let us introduce
$$X:=Y\times I \quad \text{and} \quad \bar{X}:=X\times\mathbb{R}_+,$$ 
both equipped with the product topologies, upon assuming that $\mathbb{R}_+:=[0,\infty)$ is supplied with the Euclidean topology.

Consider a family $\{S_i:\,i\in I\}$ of jointly continuous semiflows acting from $\mathbb{R}_+\times Y$ to~$Y$. By calling $S_i$ a semiflow we mean, as usual, that
$$S_i(s,S_i(t,y))=S_i(s+t,y)\quad\text{and}\quad S_i(0,y)=y\quad\text{for any}\quad s,t\in\mathbb{R}_+,\;y\in Y.$$
Further, let $\{\pi_{ij}:\, i,j\in I\}$ be a collection of continuous maps from $Y$ to $[0,1]$ such that 
$$\sum_{j\in I} \pi_{ij}(y)=1\quad\text{for all}\quad i\in I,\; y\in Y.$$
Moreover, let $\lambda:Y\to (0,\infty)$ be a continuous function satisfying
\begin{equation}
\label{e:lambda_bounds}
\ulambda \leq \lambda(y) \leq \olambda \quad \text{for every}\quad y\in Y,
\end{equation}
with certain constants $\ulambda, \olambda>0$, and put
$$\Lambda(y,i,t):=\int_0^t \lambda(S_i(h,y))\,dh\quad \text{for}\quad y\in Y,\; i\in I,\; t\in\mathbb{R}_+.$$
Finally, suppose that we are given an arbitrary stochastic kernel $J: Y\times \mathcal{B}(Y)\to [0,1]$.

Let us now define a stochastic kernel $\bar{P}: \bar{X}\times\mathcal{B}(\bar{X})\to [0,1]$ by setting
\begin{align}
\begin{split}
\label{def:P_bar}
\bar{P}((y,i,s), \bar{A})&= \int_0^{\infty} \lambda(S_i(t,y)) e^{-\Lambda(y,i,t)}\\ &\quad\times\int_Y \sum_{j\in I} \pi_{ij}(u)\mathbbm{1}_{\bar{A}}(u,j,s+t)\,J(S_i(t,y),du)\,dt
\end{split}
\end{align}
for any $y\in Y$, $i\in I$, $s\in\mathbb{R}_+$ and $\bar{A}\in\mathcal{B}(\bar{X})$. Furthermore, let $P:X\times\mathcal{B}(X)\to [0,1]$ be given by
\begin{equation}
\label{def:P}
P((y,i),A):=\bar{P}((y,i,0),A\times\mathbb{R}_+)\quad\text{for}\quad y\in Y,\;i\in I,\; A\in\mathcal{B}(X).
\end{equation}

\begin{remark}\label{rem:feller}
Taking into account the continuity of the maps $X\ni (y,i) \mapsto S_i(t,y)$ for $t\geq 0$, $(y,i)\mapsto \pi_{ij}(y)$ for $j\in I$, and $(y,i)\mapsto \lambda(y)$, it is easy to see that $P$ is Feller whenever so is the kernel $J$.
\end{remark}
By $\bar{\Phi}:=\{(Y_n,\xi_n,\tau_n)\}_{n\in\n_0}$ we will denote a time-homogeneous Markov chain with state space $\bar{X}$ and transition law~$\bar{P}$, wherein $Y_n$, $\xi_n$, $\tau_n$ take values in $Y$, $I$, $\mathbb{R}_+$, respectively. For simplicity of analysis, we shall regard $\bar{\Phi}$ as the canonical Markov chain (starting from some point of $\bar{X}$), constructed on the coordinate space $\Omega:=\bar{X}^{\n_0}$, equipped with the $\sigma$-field $\mathcal{F}:=\mathcal{B}\left(\bar{X}^{\n_0}\right)$ and a suitable probability measure $\pr$ on $\mathcal{F}$. Obviously, $\Phi:=\{(Y_n,\xi_n)\}_{n\in \n_0}$ is then a Markov chain with respect to its own natural filtration, governed by the transition law~$P$, given by \eqref{def:P}. Moreover, for every~$n\in\n_0$, we have
\begin{gather}
\nonumber
\pr(Y_{n+1}\in B\,\,|\,\Phi_n;\;S_{\xi_n}( \Delta\tau_{n+1},Y_n)=y)=J(y,B)\quad\text{for}\quad y\in Y,\; B\in\mathcal{B}(Y),\\
\nonumber
\pr(\xi_{n+1}=j\,\,|\,Y_n;\;\xi_n=i, \,Y_{n+1}=y)=\pi_{ij}(y)\quad\text{for}\quad y\in Y,\;i,j\in I,
\end{gather}
where $\Delta \tau_{n+1}:=\tau_{n+1}-\tau_n$, and
\begin{equation}
\label{e:dist_tau}
\pr(\tau_{n+1}\leq t\,|\,\bar{\Phi}_n=(y,i,s))=\mathbbm{1}_{[s,\infty)}(t) \left(1-e^{-\Lambda(y,i,t-s)}\right)
\end{equation}
for  $t\in\mathbb{R}_+$ and $(y,i,s)\in\bar{X}$.

In particular, \eqref{e:dist_tau} implies that the conditional distributions of $\Delta \tau_{n+1}$  given~$\bar{\Phi}_n$ are of the form
$$\pr(\Delta\tau_{n+1}\leq t\,|\,\bar{\Phi}_n=(y,i,s))=1-e^{-\Lambda(y,i,t)}\quad\text{for}\quad t\in\mathbb{R}_+,\; (y,i,s)\in\bar{X}.$$
This yields that $\Delta \tau_n>0$ a.s. for all $n\in\n$ (so $\{\tau_n\}_{n\in\n_0}$ is a.s. strictly increasing), and,  together with the Markov property of $\bar{\Phi}$, shows that $\Delta\tau_1,\Delta \tau_2,\ldots$ are mutually independent. Further, it follows that, for any $n,r\in\n$,
\begin{align*}
\ew[(\Delta\tau_{n})^r]&=\ew\left[\ew[(\Delta\tau_{n})^r\,|\,\bar{\Phi}_{n-1}]\right]=\int_0^{\infty} t^r \ew\left[\lambda(S_{\xi_{n-1}}(t,\xi_{n-1}))e^{-\Lambda(t,Y_{n-1},\xi_{n-1})}\right]dt,
\end{align*}
whence, in view of \eqref{e:lambda_bounds}, we get 
$$
\ulambda \olambda^{-(r+1)}r!\leq  \ew[(\Delta\tau_{n})^r]\leq\olambda \ulambda^{-(r+1)}r!.$$
Consequently, Kolmogorov's criterion guarantees that $(\Delta\tau_n-\ew\Delta\tau_n)_{n\in\n}$ obeys the strong law of large numbers. Hence, writing

$$\tau_n-\tau_0=n\hspace{-0.05cm}\left(\frac{\sum_{k=1}^n (\Delta\tau_k\hspace{-0.05cm}-\ew\Delta\tau_k)}{n}+\frac{\sum_{k=1}^n\ew\Delta\tau_k}{n}\right)\geq n \hspace{-0.05cm}\left(\frac{\sum_{k=1}^n (\Delta\tau_k-\ew\Delta\tau_k)}{n}+\ulambda \olambda^{-2}\right)$$
for $n\in\n$, we can conclude that $\tau_n \uparrow \infty$ a.s.

The main focus of our study will be the PDMP $\Psi:=\{\Psi(t)\}_{t\in\mathbb{R}_+}$ of the form $$\Psi(t):=(Y(t),\xi(t))\quad\text{for}\quad t\in\mathbb{R}_+,$$ 
defined via interpolation of $\Phi$ according to formula \eqref{def:pdmp}. Clearly, this definition is well-posed since $\tau_n \uparrow \infty$ a.s., and $\Phi$ describes the post-jump locations of the process~$\Psi$, that~is,
$$\Phi_n=(Y_n,\xi_n)=(Y(\tau_n),\xi(\tau_n))=\Psi(\tau_n) \quad\text{for every} \quad n\in\n_0.$$ 
In what follows, the Markov transition semigroup of $\Psi$ will be denoted by $\{P_t\}_{t\geq 0}$.

\section{Main results}\label{sec:main}

In this section, we shall formulate the main result of this paper, concerning a \hbox{one-to-one} correspondence between invariant probability measures of the transition semigroup $\{P_t\}_{t\in\mathbb{R}_+}$ of the process~$\Psi$, determined by \eqref{def:pdmp}, and those of the transition operator~$P$ of the chain~$\Phi$, given by \eqref{def:P}. 

For this aim, let us consider two stochastic kernels $G,W:X\times\mathcal{B}(X)\to [0,1]$ given by
\begin{gather}
\label{def:G}
G(x,A):=\int_0^{\infty} \lambda(S_i(t,y))e^{-\Lambda(y,i,t)}\mathbbm{1}_A(S_i(t,y),i)\,dt,\\
\label{def:W}
W(x,A):= \int_Y \sum_{j\in I} \pi_{ij}(u)\mathbbm{1}_A(u,j)\,J(y,du)
\end{gather}
for all $x=(y,i)\in X$ and $A\in\mathcal{B}(X)$, where $J$ stands for the kernel involved in \eqref{def:P_bar}. Further, define
\begin{equation}
\label{def:hlambda}
\hlambda(y,i):=\lambda(y)\quad \text{for any}\quad y\in Y,\;i\in I,
\end{equation}
and introduce two (generally non-stochastic) kernels $\tG,\tW:X\times\mathcal{B}(X)\to [0,\infty)$ of the form
$$\tG(x,A):=G\left(\mathbbm{1}_A/\hlambda\right)\hspace{-0.1cm}(x)\quad\text{and}\quad \tW(x,A):=\hlambda(x)\,W\mathbbm{1}_A(x) \;\;\text{for all}\;\; x\in X,\;A\in \mathcal{B}(X).$$

\begin{remark}\label{rem:gw}
It is easily seen that $GW=\tG\tW=P$, where $P$ is given by~\eqref{def:P}.
\end{remark}

\begin{remark} According to \eqref{e:lambda_bounds}, for any $x\in X$ and $A\in\mathcal{B}(X)$, we have
$$\olambda^{-1}  \leq  \tG (x,A)\leq \ulambda^{-1}\quad \text{and}\quad  \ulambda \leq  \tW (x,A)\leq \olambda.$$
Consequently, the kernels $\tG$ and $\tW$ are bounded, and thus the sets $\mathcal{M}(X)$ and $B_b(X)$ are invariant under the operators induced by these kernels according to \eqref{e:mK} and \eqref{e:Kf}, respectively. Moreover, note that, if $\mu\in\mathcal{M}(X)$ is a non-zero measure, then the measures $\mu \tG$ and $\mu \tW$ are non-zero as well.
\end{remark}

\begin{remark}\label{rem:feller_gw} The kernels $G$ and $\widetilde{G}$ are Feller. Moreover, if the kernel $J$ is Feller then so are the kernels $W$ and~$\widetilde{W}$.
\end{remark}

Essentially, apart from the conditions imposed on the model components in Section~\ref{sec:def}, the only assumption that we need to make in our main theorem is the following strengthened version of the Feller property for the kernel $J$:

\begin{assumption}\label{cnd:f}
For every $g\in C_b(Y\times\mathbb{R}_+)$, the map \hbox{$Y\times\mathbb{R}_+\ni(y,t)\mapsto J g(\cdot,t)(y)$} is jointly continuous.
\end{assumption}
Although the above assumption might appear somewhat technical, it is crucial to ensure the joint continuity of a certain specific function used to obtain an explicit form of the semigroup~$\{P_t\}_{t\in\mathbb{R}+}$ in the upcoming Lemma \ref{lem:Pt_prop}\ref{lem:ii}. This continuity, in turn, will be needed to prove the subsequent Lemma \ref{lem:main2}, which plays a key role in our approach.

The main result of the paper reads as follows:
\begin{theorem} \label{thm:main}
Suppose that the kernel $J$ satisfies Assumption~\ref{cnd:f}.
\begin{enumerate}[label=\textnormal{(\alph*)}, leftmargin=*]
\item\label{cnd:a}If $\mu_*^{\Phi}\in\mathcal{M}_1(X)$ is invariant for $P$, then 
\begin{equation}
\label{e:m_psi}
\mu_*^{\Psi}:=\frac{\mu_*^{\Phi}\tG}{\mu_*^{\Phi}\tG(X)}
\end{equation}
is an invariant probability measure of $\{P_t\}_{t\in\mathbb{R}_+}$, and 
\begin{equation}
\label{e:m_phi}
\mu_*^{\Phi}=\frac{ \mu_*^{\Psi} \tW}{\mu_*^{\Psi}\tW(X)}.
\end{equation}
\item\label{cnd:b} If $\mu_*^{\Psi}\in\mathcal{M}_1(X)$ is invariant for $\{P_t\}_{t\in\mathbb{R}_+}$, then $\mu_*^{\Phi}$ defined by \eqref{e:m_phi} is an invariant probability measure of $P$, and $\mu_*^{\Psi}$ can be then expressed as in \eqref{e:m_psi}.
\end{enumerate}
\end{theorem}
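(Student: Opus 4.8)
The plan is to exploit the semi-explicit structure of $\{P_t\}_{t\in\mathbb{R}_+}$ via the weak generator. The key algebraic identity is Remark~\ref{rem:gw}: $P = \widetilde G\widetilde W$, where (loosely) $\widetilde G$ encodes the deterministic flow run for an exponential-type holding time and $\widetilde W$ encodes the jump. The heuristic is that a $\Psi$-invariant measure should be a ``flow-average'' of a $P$-invariant measure, i.e. $\mu_*^{\Psi}\propto \mu_*^{\Phi}\widetilde G$, and conversely $\mu_*^{\Phi}\propto\mu_*^{\Psi}\widetilde W$; the two normalising constants are reciprocal because $\widetilde G$ and $\widetilde W$ are ``adjoint'' through the factor $\hat\lambda$. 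I would first dispose of the purely formal part: assuming the auxiliary lemmas quoted in the introduction (in particular the explicit form of $P_t$ from the forthcoming Lemma~\ref{lem:Pt_prop}, and Lemma~\ref{lem:main2}), show that (i) $\mu_*^{\Phi}\widetilde G$ is a finite nonzero measure whenever $\mu_*^{\Phi}$ is (immediate from the two-sided bounds $\bar\lambda^{-1}\le\widetilde G(x,X)\le\underline\lambda^{-1}$), so \eqref{e:m_psi} is well-defined; and symmetrically for \eqref{e:m_phi}; and (ii) the round-trip identities $(\mu_*^{\Phi}\widetilde G)\widetilde W = \mu_*^{\Phi}\widetilde G\widetilde W = \mu_*^{\Phi}P = \mu_*^{\Phi}$ and $(\mu_*^{\Psi}\widetilde W)\widetilde G = \mu_*^{\Psi}$ hold, where the last one is the genuinely substantive step and must come from the invariance of $\mu_*^{\Psi}$ under the whole semigroup, not from a naive cancellation (since $\widetilde W\widetilde G\neq P$ in general). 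Once these are in hand, \eqref{e:m_phi} follows from \eqref{e:m_psi} by applying $\widetilde W$ and renormalising, and conversely, so parts \ref{cnd:a} and \ref{cnd:b} become two halves of the same computation.

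For part~\ref{cnd:a}, the core is to verify that $\nu := \mu_*^{\Phi}\widetilde G$ (unnormalised) is invariant for every $P_t$. Here I would follow the weak-generator route sketched in the introduction: using that $\{P_t\}$ is Feller (Remark~\ref{rem:feller}, Remark~\ref{rem:feller_gw}) and weakly continuous on $C_b(X)$ — so that by Remark~\ref{rem:inf}\ref{rem:i} the $w^*$-closure of $D(A_{P_\bullet})$ contains $C_b(X)$ — it suffices to check $\langle A_{P_\bullet}f,\nu\rangle = 0$ for all $f$ in a suitable $w^*$-dense subset of $C_b(X)$, and then pass to the limit. The weak generator of $\{P_t\}$ acting on nice $f$ should have the form $A_{P_\bullet}f = \mathcal{L}f + \hat\lambda\,(Wf - f)$, where $\mathcal{L}$ is the (weak) generator of the pure-flow semigroup; this is exactly what Lemma~\ref{lem:Pt_prop} and the lemmas overcoming the non-constancy of $\lambda$ (Lemmas~\ref{lem:main1}, \ref{lem:tech2}) are designed to deliver. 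Then $\langle A_{P_\bullet}f,\mu_*^{\Phi}\widetilde G\rangle = 0$ reduces, after writing $\widetilde G$ out and integrating $\mathcal{L}f$ along the flow (a telescoping/integration-by-parts in $t$ that produces $\hat\lambda(f - \cdot)$ boundary terms and cancels the killing term $-\hat\lambda e^{-\Lambda}$), to the identity $\langle f,\mu_*^{\Phi}\rangle = \langle \widetilde G\widetilde W f,\mu_*^{\Phi}\rangle = \langle Pf,\mu_*^{\Phi}\rangle$, which holds precisely because $\mu_*^{\Phi}$ is $P$-invariant. I would organise this as: (1) state the explicit $P_t$ and $A_{P_\bullet}$ from Lemma~\ref{lem:Pt_prop}; (2) do the flow-integration identity $\int_0^\infty \lambda(S_i(t,y))e^{-\Lambda}\,\mathcal{L}f(S_i(t,y),i)\,dt$ in closed form; (3) combine to get $\langle A_{P_\bullet}f,\nu\rangle = \langle Pf - f,\mu_*^{\Phi}\rangle = 0$; (4) invoke the $w^*$-density and conclude $\nu P_t = \nu$; (5) normalise.

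For part~\ref{cnd:b}, given $\mu_*^{\Psi}$ invariant for $\{P_t\}$, I would run the argument in reverse: $\langle A_{P_\bullet}f,\mu_*^{\Psi}\rangle = 0$ for all $f$ in the dense class (this is the standard consequence of $\mu P_t = \mu$ together with Dynkin's formula, Remark~\ref{rem:inf}\ref{rem:ii}), and then unwind the same flow-integration identity to extract that $\mu_*^{\Phi} := \mu_*^{\Psi}\widetilde W$ (unnormalised) satisfies $\langle f,\mu_*^{\Phi}\rangle = \langle Pf,\mu_*^{\Phi}\rangle$, i.e. is $P$-invariant; the bounds on $\widetilde W$ guarantee it is finite and nonzero, and the normalising constant $\mu_*^{\Psi}\widetilde W(X)$ is positive. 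That $\mu_*^{\Psi}$ is then recovered as in \eqref{e:m_psi} follows by feeding this $\mu_*^{\Phi}$ back into part~\ref{cnd:a} and noting the normalisation constants multiply to~$1$; this uses the ``adjointness'' $\int \hat\lambda\,d(\mu\widetilde G) = \mu(X)$-type bookkeeping, or more directly the fact that $(\mu_*^{\Psi}\widetilde W)\widetilde G$ is an invariant measure of $\{P_t\}$ proportional to $\mu_*^{\Psi}$, and both are probabilities after normalisation.

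\textbf{Main obstacle.} The delicate point is not the measure-algebra (Remark~\ref{rem:gw} makes the round trips almost automatic) but the analytic step: proving that it genuinely suffices to test invariance of $\nu$ against the weak generator on a $w^*$-dense subclass of $C_b(X)$, and that the explicit form of $A_{P_\bullet}f$ given by Lemma~\ref{lem:Pt_prop} is valid on a class large enough to be $w^*$-dense — this is where the non-constant $\lambda$ bites (the holding-time density $\lambda(S_i(t,y))e^{-\Lambda(y,i,t)}$ no longer factors through a single exponential), and where Lemmas~\ref{lem:main1}, \ref{lem:tech2}, and Assumption~\ref{cnd:f} (ensuring the relevant functions are jointly continuous, hence in $C_b$) do the real work. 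The flow-integration identity in step~(2) is the other place to be careful: one must justify differentiating under the integral and the integration by parts in $t$ along the semiflow, using the two-sided bound \eqref{e:lambda_bounds} to control $e^{-\Lambda}$ and the boundedness of $f$ and $\lambda$; I expect this to be routine given the groundwork in Section~\ref{sec:proof_aux} but it is the computational heart of the proof.
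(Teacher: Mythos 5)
Your proposal follows essentially the same route as the paper: express the weak generator as $A_P f = A_Q f + \hat\lambda(Wf - f)$ (the paper's Lemma~\ref{lem:main2}), use the flow-integration/resolvent identity $(\operatorname{id}-A_Q/\hat\lambda)^{-1}=G$ (Lemma~\ref{lem:main1}) to reduce $\langle A_P f,\mu_*^{\Phi}\tG\rangle$ to $\langle Pf-f,\mu_*^{\Phi}\rangle=0$, extend via Dynkin's formula and the $w^*$-density of $D(A_P)$ in $C_b(X)$ to conclude $\{P_t\}$-invariance, and close the loop with the identity $\tG\tW=P$; part \ref{cnd:b} is obtained by the same computation run backwards, yielding the substantive $\tW\tG$-invariance of $\mu_*^{\Psi}$. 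Your reduction $\langle A_P f,\mu_*^{\Phi}\tG\rangle=\langle Gf-f,\mu_*^{\Phi}\rangle+\langle Pf,\mu_*^{\Phi}\rangle-\langle Gf,\mu_*^{\Phi}\rangle=\langle Pf-f,\mu_*^{\Phi}\rangle$ is a slightly more direct phrasing of the paper's passage through the $WG$-invariance of $\nu_*=\mu_*^{\Phi}G$, but it is the same argument with the same ingredients.
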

\noindent The proof of Theorem \ref{thm:main}, together with all needed auxiliary results, is given in Section~\ref{sec:proof}.

\begin{remark}
Note that 
$$\mu_*^{\Phi}\tG(X)=\int_X \int_0^{\infty} e^{-\Lambda(x,t)}\,dt\,\mu_*^{\Phi}(dx) \quad\text{and}\quad \mu_*^{\Psi}\tW(X)=\int_X \lambda(x)\mu_*^{\Psi}(dx).$$
Hence, in particular, if $\lambda$ is constant, then $\mu_*^{\Psi}=\mu_*^{\Phi}G$ and $\mu_*^{\Phi}= \mu_*^{\Psi} W.$
\end{remark}

\begin{corollary}\label{cor:main}
Suppose that Assumption \ref{cnd:f} holds. Then $\{P_t\}_{t\in\mathbb{R}_+}$ admits a unique invariant probability measure if and only if so does $P$.
\end{corollary}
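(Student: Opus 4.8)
The plan is to obtain the corollary as an immediate formal consequence of Theorem~\ref{thm:main}, with essentially no work beyond recording that the formulas \eqref{e:m_psi} and \eqref{e:m_phi} set up a bijection between the two families of stationary distributions. Write $\mathcal{I}_{\Phi}$ for the set of all $P$-invariant measures in $\mathcal{M}_1(X)$ and $\mathcal{I}_{\Psi}$ for the set of all measures in $\mathcal{M}_1(X)$ that are invariant for $\{P_t\}_{t\in\mathbb{R}_+}$. First I would point out that, since $\tG$ and $\tW$ are bounded and map non-zero members of $\mathcal{M}(X)$ to non-zero members of $\mathcal{M}(X)$ (as noted in the remarks following their definition), the assignments
$$T_1(\mu):=\frac{\mu\tG}{\mu\tG(X)},\qquad T_2(\nu):=\frac{\nu\tW}{\nu\tW(X)}$$
are well-defined maps of $\mathcal{M}_1(X)$ into itself.

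Next I would invoke the two parts of Theorem~\ref{thm:main}. Theorem~\ref{thm:main}\ref{cnd:a} says precisely that $T_1(\mathcal{I}_{\Phi})\subset\mathcal{I}_{\Psi}$ and that $T_2\big(T_1(\mu)\big)=\mu$ for every $\mu\in\mathcal{I}_{\Phi}$; Theorem~\ref{thm:main}\ref{cnd:b} says that $T_2(\mathcal{I}_{\Psi})\subset\mathcal{I}_{\Phi}$ and that $T_1\big(T_2(\nu)\big)=\nu$ for every $\nu\in\mathcal{I}_{\Psi}$. Hence the restriction $T_1\colon\mathcal{I}_{\Phi}\to\mathcal{I}_{\Psi}$ is a bijection, with inverse $T_2\colon\mathcal{I}_{\Psi}\to\mathcal{I}_{\Phi}$. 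In particular $\mathcal{I}_{\Phi}$ and $\mathcal{I}_{\Psi}$ have the same cardinality, so $\mathcal{I}_{\Psi}$ is a singleton if and only if $\mathcal{I}_{\Phi}$ is — which is exactly the assertion of the corollary. (The same argument also covers the degenerate situation in which both sets are empty, since each of the inclusions above shows that non-emptiness of one set implies non-emptiness of the other.)

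I do not expect any genuine obstacle here: all the substantive content has been absorbed into Theorem~\ref{thm:main}, and what remains is only the set-theoretic observation that a pair of mutually inverse maps between $\mathcal{I}_{\Phi}$ and $\mathcal{I}_{\Psi}$ forces these sets to have equal cardinality. The sole minor point requiring attention is the well-definedness of the normalizations in $T_1$ and $T_2$, which is guaranteed by the boundedness and strict positivity estimates for $\tG$ and $\tW$ recorded after their definition.
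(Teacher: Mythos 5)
Your proposal is correct and coincides with the argument the paper implicitly relies on (the paper states the corollary without a proof precisely because it is the immediate bijection consequence of Theorem~\ref{thm:main}, which is what you record). The observation that $\tG$ and $\tW$ send non-zero measures to non-zero measures, making the normalizations in \eqref{e:m_psi} and \eqref{e:m_phi} well defined, is the right detail to flag, and the rest is the set-theoretic fact that the two parts of Theorem~\ref{thm:main} exhibit mutually inverse maps between the two sets of invariant probability measures.
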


Let us finish this section with a simple observation concerning the property of having finite first moments by the measures featured in Theorem \ref{thm:main}. We are interested in the moments with respect to the function $V:X\to [0,\infty)$ given by
\begin{equation}
\label{def:V}
V(x):=\rho_Y(y,y_*)\quad\text{for}\quad x=(y,i)\in X,
\end{equation}
where $y_*$ is an arbitrarily fixed point of $Y$. To state an appropriate result, let us introduce additionally the following two assumptions, which will also be utilized in Section \ref{sec:application}:

\begin{assumption}\label{cnd:a1}
For some point $y_*\in Y$ we have
$$
\beta(y_*):=\max_{i\in I}\int_0^{\infty} e^{-\ulambda t} \rho_Y(S_i(t,y_*),y_*)\,dt<\infty.
$$
\end{assumption}

\begin{assumption}\label{cnd:a2}
There exist constants $L>0$ and $\alpha<\ulambda$ such that
$$
\rho_Y(S_i(t,u), S_i(t,v))\leq Le^{\alpha t}\rho_Y(u,v)\;\;\;\mbox{for}\;\;\;u,v\in Y,\;i\in I,\;t\geq 0.
$$
\end{assumption}
It should be noted here that, if Assumptions \ref{cnd:a1} and \ref{cnd:a2} are fulfilled, then \hbox{$\beta(y_*)<\infty$} for every $y_*\in Y$.
\vspace{0.2cm}

\begin{proposition}\label{prop:m1v}
Let $V$ be the function given by \eqref{def:V}. Then, the following holds:
\begin{itemize}
\item[(i)] Under Assumptions \ref{cnd:a1} and \ref{cnd:a2}, for any $\mu_*^{\Phi}\in\mathcal{M}_1^V(X)$, the measure $\mu_*^{\Psi}$ defined by \eqref{e:m_psi} also belongs to $\mathcal{M}_1^V(X)$.
\item[(ii)] Suppose that there exist constants $\tilde{a},\tilde{b}\geq 0$ such that, for \hbox{$\widetilde{V}:=\rho_Y(\cdot,y_*)$}, we have
\begin{equation}
\label{e:lapunov_J}
J\widetilde{V}(y)\leq \tilde{a} \widetilde{V}(y)+\tilde{b}\quad\text{for all}\quad y\in Y.
\end{equation}

Then, for any $\mu_*^{\Psi}\in\mathcal{M}_1^V(X)$, the measure $\mu_*^{\Phi}$ defined by \eqref{e:m_phi} also belongs to  $\mathcal{M}_1^V(X)$.
\end{itemize}
\end{proposition}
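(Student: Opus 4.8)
We sketch the argument. The plan is to reduce both parts to an elementary drift-type inequality for the kernels $\tG$ and $\tW$, and then integrate. The starting point is the bounds $\olambda^{-1}\le\tG(x,A)\le\ulambda^{-1}$ and $\ulambda\le\tW(x,A)\le\olambda$ (valid for all $x\in X$, $A\in\mathcal{B}(X)$): integrating these against a probability measure shows that the normalizing constants $\mu_*^{\Phi}\tG(X)$ and $\mu_*^{\Psi}\tW(X)$ in \eqref{e:m_psi} and \eqref{e:m_phi} lie in $[\olambda^{-1},\ulambda^{-1}]$ and $[\ulambda,\olambda]$, respectively, hence are finite and strictly positive. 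Therefore, using $\<V,\mu\kappa\>=\<\kappa V,\mu\>$, the assertion $\mu_*^{\Psi}\in\mathcal{M}_1^V(X)$ is equivalent to $\<\tG V,\mu_*^{\Phi}\><\infty$, and $\mu_*^{\Phi}\in\mathcal{M}_1^V(X)$ is equivalent to $\<\tW V,\mu_*^{\Psi}\><\infty$. All integrands occurring below are nonnegative, so Tonelli's theorem applies throughout without further comment.

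For part (i), I would first compute $\tG V$ explicitly. Since $\tG(x,A)=G(\mathbbm{1}_A/\hlambda)(x)$, one has $\tG f=G(f/\hlambda)$ for nonnegative Borel $f$; applying this with $f=V$ and using $V(y,i)=\rho_Y(y,y_*)$ together with $\hlambda(y,i)=\lambda(y)$, the weight $\lambda(S_i(t,y))$ in \eqref{def:G} cancels exactly, leaving
\[
\tG V(y,i)=\int_0^{\infty}e^{-\Lambda(y,i,t)}\,\rho_Y(S_i(t,y),y_*)\,dt .
\]
From here I would bound $\Lambda(y,i,t)\ge\ulambda t$ via \eqref{e:lambda_bounds}, split $\rho_Y(S_i(t,y),y_*)$ by the triangle inequality through the point $S_i(t,y_*)$, estimate the term $\rho_Y(S_i(t,y),S_i(t,y_*))$ by Assumption~\ref{cnd:a2} and the term $\int_0^{\infty}e^{-\ulambda t}\rho_Y(S_i(t,y_*),y_*)\,dt$ by Assumption~\ref{cnd:a1}. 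Because $\alpha<\ulambda$, the integral $\int_0^{\infty}e^{-(\ulambda-\alpha)t}\,dt$ converges, and this yields an inequality of the form $\tG V(x)\le\frac{L}{\ulambda-\alpha}\,V(x)+\beta(y_*)$ for all $x\in X$. Integrating against $\mu_*^{\Phi}\in\mathcal{M}_1^V(X)$ and dividing by $\mu_*^{\Phi}\tG(X)$ then gives $\mu_*^{\Psi}\in\mathcal{M}_1^V(X)$.

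For part (ii), I would instead use $\tW(x,A)=\hlambda(x)\,W\mathbbm{1}_A(x)$, so that $\tW f=\hlambda\cdot Wf$, and in particular $\tW V(y,i)=\lambda(y)\,WV(y,i)$. By \eqref{def:W} and $\sum_{j\in I}\pi_{ij}(u)=1$, the sum over $I$ collapses, and since $\rho_Y(u,y_*)$ does not depend on $j$,
\[
WV(y,i)=\int_Y\sum_{j\in I}\pi_{ij}(u)\,\rho_Y(u,y_*)\,J(y,du)=\int_Y\rho_Y(u,y_*)\,J(y,du)=J\widetilde{V}(y).
\]
Then \eqref{e:lapunov_J} together with $\lambda\le\olambda$ gives $\tW V(x)\le\olambda\tilde a\,V(x)+\olambda\tilde b$ for all $x\in X$, and integrating against $\mu_*^{\Psi}\in\mathcal{M}_1^V(X)$, then dividing by $\mu_*^{\Psi}\tW(X)$, yields $\mu_*^{\Phi}\in\mathcal{M}_1^V(X)$.

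I do not expect a genuine obstacle here; both parts are essentially bookkeeping around the triangle inequality and the elementary bounds \eqref{e:lambda_bounds}. The one point that calls for a little care is the identification of $\tG V$ in part~(i): one must keep track of the facts that $V$ lifts $\widetilde V$ via $V(y,i)=\widetilde V(y)$ and that $\hlambda(y,i)=\lambda(y)$, so that the division by $\hlambda$ built into $\tG$ precisely removes the jump-rate weight from \eqref{def:G}, and one should note that the hypothesis $\alpha<\ulambda$ in Assumption~\ref{cnd:a2} is exactly what makes the resulting exponential integral finite.
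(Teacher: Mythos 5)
Your proposal is correct and follows essentially the same route as the paper: in both parts you reduce to the pointwise drift bounds $\tG V\le \tfrac{L}{\ulambda-\alpha}V+\beta(y_*)$ and $\tW V\le\olambda\tilde a\,V+\olambda\tilde b$ via exactly the paper's computations, and then integrate; the only (harmless) addition is your explicit remark on the positivity and finiteness of the normalizing constants and the duality $\<V,\mu\kappa\>=\<\kappa V,\mu\>$, which the paper leaves implicit.
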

\begin{proof}
To see (i) it suffices to observe that, for every $(y,i)\in X$,
\begin{align*}
\tG V(y,i)&=\int_0^{\infty} e^{-\Lambda(y,i,t)} \rho_Y(S_i(t,y),y_*)\,dt \\
&\leq \int_0^{\infty} e^{-\ulambda t} \rho_Y(S_i(t,y),S_i(t,y_*))\,dt+\int_0^{\infty} e^{-\ulambda t} \rho_Y(S_i(t,y_*),y_*)\,dt\\
&\leq L\left(\int_0^{\infty}e^{-(\ulambda-\alpha)t}\,dt \right) \rho_Y(y,y_*) + \beta(y_*)=\frac{L}{\ulambda-\alpha} V(y,i)+\beta(y_*).
\end{align*}
Statement (ii) follows from the fact that, for any $(y,i)\in X$,
\begin{align*}
\tW V (y,i)& = \lambda(y,i) \int_Y \sum_{j\in I} \pi_{ij}(u) \widetilde{V}(u) J(y,du) =  \lambda(y,i) J\widetilde{V}(y)\\
&\leq \olambda (\tilde{a} \widetilde{V}(y) +\tilde{b})=\olambda\tilde{a}V(y,i) +\olambda \tilde{b}.
\end{align*}
\end{proof}

\section{Proof of the main theorem}\label{sec:proof}
Before we proceed to prove Theorem \ref{thm:main}, let us first formulate certain auxiliary results \hbox{(specifically, Lemmas \ref{lem:Pt_prop}--\ref{lem:main1})}, whose proofs are given in Section \ref{sec:proof_aux}.

The first result (proved in Section \ref{sec:proof_props}) collects certain properties of the transition semigroup ${\{P_t\}}_{t\in\mathbb{R}+}$, which are essential for establishing the forthcoming Lemma \ref{lem:main2}, concerning the weak generator of this semigroup. It is also worth noting that this result extends the scope of \cite[Lemma 5.1]{b:czapla_erg}, which was previously established only for constant $\lambda$ and $\pi_{ij}$.

\begin{lemma}\label{lem:Pt_prop}
The following statements hold for the transition semigroup $\{P_t\}_{t\in\mathbb{R}_+}$ of the process~$\Psi$, specified by \eqref{def:pdmp}:
\begin{enumerate}[label=(\roman*), leftmargin=*, widest=iii]
\item\label{lem:i} If $J$ is Feller, then $\{P_t\}_{t\in\mathbb{R}_+}$ is Feller too.
\item\label{lem:ii} For every $f\in B_b(X)$, there exist functions $u_f:X\times \mathbb{R}_+\to\mathbb{R}$ and $\psi_f:X\times \dpsi \to\mathbb{R}$, with $\dpsi:=\left\{(t,T)\in\mathbb{R}_+^2: t\leq T\right\}$, such that
\begin{equation}
\label{e:P_T_app}
P_T f(x)=e^{-\Lambda(y,i,T)} f(S_i(T,y),i)+\int_0^T \psi_f((y,i),t,T)\,dt+u_f((y,i),T)
\end{equation}
for any $x=(y,i)\in X$ and $T\in\mathbb{R}_+$, and that the following conditions hold:
\begin{equation}
\label{e:prop_uf}
u_f(\cdot,T)\in B_b(X) \quad\text{for all}\quad T\in\mathbb{R}_+,\quad\lim_{T\to 0} \norma{u_f(\cdot,T)}_{\infty}/{T}=0,
\end{equation}
\begin{equation}
\label{e:psi_prop1}
\psi_f(\cdot,t,T)\in B_b(X),\quad \norma{\psi_f(\cdot,t,T)}_{\infty}\leq\olambda\norma{f}_{\infty}\quad\text{for any}\quad (t,T)\in \dpsi,
\end{equation}
\begin{equation}
\label{e:psi_prop2}
\psi_f(x,0,0)=\hlambda(x) Wf(x)\quad\text{for every}\quad x\in X,
\end{equation}
with $\hlambda$ given by \eqref{def:hlambda}, and, if Assumption \ref{cnd:f} is fulfilled, then \hbox{$\dpsi\ni (t,T)\mapsto \psi_f(x,t,T)$} is jointly continuous for every $x\in X$ whenever $f\in C_b(X)$.

\item\label{lem:iii} $\{P_t\}_{t\in\mathbb{R}_+}$ is stochastically continuous, i.e. 
$$\lim_{T\to 0} P_T f(x)=f(x)\quad\text{for any}\quad x\in X,\; f\in C_b(X).$$
\end{enumerate}
\end{lemma}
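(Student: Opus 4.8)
The plan is to establish the three statements essentially independently, each by a direct computation starting from an explicit representation of $P_Tf$ obtained by conditioning on the first jump time $\tau_1 = \Delta\tau_1$ and on the post-jump location. The starting point for all three parts is the renewal-type decomposition: for $x = (y,i) \in X$, splitting according to whether the first jump occurs after time $T$ or before it,
\begin{equation*}
P_T f(x) = e^{-\Lambda(y,i,T)} f(S_i(T,y),i) + \int_0^T \lambda(S_i(t,y)) e^{-\Lambda(y,i,t)} \left( W (P_{T-t} f) \right)(S_i(t,y),i)\,dt,
\end{equation*}
where $W$ is the kernel from \eqref{def:W}. This identity follows from the strong Markov property at $\tau_1$ together with the definition of $\bar P$ in \eqref{def:P_bar} and the fact that $\Psi$ restarted at $\tau_1$ from $\Phi_1$ is again the PDMP; it should be recorded as the first step (perhaps with a short measurability/Fubini justification, using that $\lambda$ is bounded so the $t$-integral converges). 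Writing $T-t = T - t$ inside the integrand and comparing with \eqref{e:P_T_app} suggests the natural choice $\psi_f(x,t,T) = \lambda(S_i(t,y)) e^{-\Lambda(y,i,t)} \left(W P_{T-t} f\right)(S_i(t,y),i)$ and $u_f \equiv 0$; however, to obtain the limit statements in \eqref{e:prop_uf} and \eqref{e:psi_prop2} cleanly one wants $\psi_f(x,0,0) = \hat\lambda(x) Wf(x) = \lambda(y) Wf(y,i)$, which is exactly what this formula gives at $t = T = 0$ since $\Lambda(y,i,0) = 0$ and $P_0 f = f$. So in fact $u_f \equiv 0$ works, and \eqref{e:prop_uf}, \eqref{e:psi_prop1}, \eqref{e:psi_prop2} all follow: boundedness of $\psi_f$ and the bound $\|\psi_f(\cdot,t,T)\|_\infty \le \bar\lambda \|f\|_\infty$ come from \eqref{e:lambda_bounds}, $e^{-\Lambda} \le 1$, $\|W\|=1$ and $\|P_{T-t}f\|_\infty \le \|f\|_\infty$; the statement $u_f(\cdot,T) \in B_b(X)$ is trivial.

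For part \ref{lem:i}, I would argue that if $J$ is Feller then so is $W$ (this is Remark \ref{rem:feller_gw}, using continuity of $(y,i)\mapsto \pi_{ij}(y)$), and then run an iteration/fixed-point argument on \eqref{e:P_T_app}: the map $T \mapsto P_T$ on a finite interval $[0,T_0]$ is the unique fixed point of the contraction-like integral operator defined by the right-hand side of the renewal identity, acting on the space of families $(Q_t)_{t \le T_0}$ of kernels with $Q_t f \in C_b(X)$ for $f \in C_b(X)$; joint continuity of $(y,i)\mapsto S_i(t,y)$ and of $\lambda$, dominated convergence (using the $\bar\lambda\|f\|_\infty$ bound), and closedness of $C_b(X)$ under uniform limits give that this operator preserves the Feller property, and a Picard iteration (or Gronwall-type estimate on $\bar\lambda T_0$) then transfers it to $P_T$ itself. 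Alternatively, and perhaps more simply, one can invoke the explicit series expansion of $P_T f$ obtained by iterating the renewal identity: $P_T f = \sum_{n\ge 0} (\text{$n$-th term})$ where the $n$-th term is an $n$-fold integral involving $W$, $S_i$, $\lambda$ and $f$, each term manifestly in $C_b(X)$ when $f \in C_b(X)$, and the series converges uniformly on $[0,T_0]$ by the factorial bound on $\bar\lambda^n$.

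Part \ref{lem:iii} is then immediate from \eqref{e:P_T_app}: as $T \to 0$, the first term tends to $f(S_i(0,y),i) = f(y,i) = f(x)$ by joint continuity of $S_i$ and continuity of $f$ (and $\Lambda(y,i,T) \to 0$), while the integral term is bounded in absolute value by $\int_0^T \bar\lambda\|f\|_\infty\,dt = \bar\lambda\|f\|_\infty T \to 0$, and $u_f \equiv 0$. This also yields the second half of \eqref{e:prop_uf} for free. The genuine work — and the main obstacle — is the last clause of part \ref{lem:ii}: the joint continuity of $(t,T) \mapsto \psi_f(x,t,T)$ on $D_\psi$ for $f \in C_b(X)$, under Assumption \ref{cnd:f}. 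With the formula above, $\psi_f(x,t,T) = \lambda(S_i(t,y)) e^{-\Lambda(y,i,t)} (W P_{T-t} f)(S_i(t,y),i)$; continuity in $t$ of the prefactors is clear, so the crux is joint continuity of $(t,T,z) \mapsto (W P_{T-t} f)(z)$, i.e. one needs that $(s, z) \mapsto (W P_s f)(z)$ is jointly continuous on $\mathbb{R}_+ \times X$ for $f \in C_b(X)$. Feller-ness alone (part \ref{lem:i}) gives continuity in $z$ for fixed $s$ and — via stochastic continuity and an equicontinuity-type argument — plausibly continuity in $s$; but the joint statement is exactly where the strengthened Feller property of $J$ in Assumption \ref{cnd:f} must enter, ensuring that applying $W$ (hence $J$) does not destroy the joint continuity in the "time" and "space" variables. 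I expect the argument here to proceed by feeding the series representation of $P_s f$ into $W$ term by term and using Assumption \ref{cnd:f} to propagate joint continuity through each $J$-integration, together with uniform convergence of the series to interchange the limit with the sum; this is the step requiring the most care and is presumably why the hypothesis is singled out in the remark following Assumption \ref{cnd:f}.
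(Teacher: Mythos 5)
Your renewal decomposition after the first jump is correct, and with it your formulas for \eqref{e:prop_uf}, \eqref{e:psi_prop1}, \eqref{e:psi_prop2} and part \ref{lem:iii} all check out. However, the choice $u_f\equiv 0$, $\psi_f(x,t,T)=\lambda(S_i(t,y))e^{-\Lambda(y,i,t)}\bigl(WP_{T-t}f\bigr)(S_i(t,y),i)$ leaves a genuine gap at exactly the place you flag. With your $\psi_f$, the joint-continuity clause of \ref{lem:ii} reduces to joint continuity of $(z,s)\mapsto (WP_sf)(z)$, i.e.\ in effect of $(z,s)\mapsto P_sf(z)$, which is a strictly stronger assertion than anything the lemma itself gives you (the Feller property of \ref{lem:i} yields continuity in $z$ for fixed $s$, and \ref{lem:iii} continuity in $s$ for fixed $z$, but not the joint statement). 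You sketch a possible repair --- expanding $P_sf$ into its jump series and pushing Assumption \ref{cnd:f} through each $J$-integration term by term --- but this is not carried out, and to make it rigorous you would need joint continuity of each $n$-jump term together with locally uniform convergence, which is itself a nontrivial induction. As written, the proposal does not establish the key clause.

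The paper avoids this entirely by splitting differently: it expands $P_Tf(x)=\sum_{n\ge 0}\bar P^n(g_T\bar Ph_T)(\bar x)$ (conditioning on the number $\eta(T)$ of jumps in $[0,T]$), takes only the $n=0$ and $n=1$ terms explicitly, and dumps the entire $n\ge 2$ tail into $u_f$. Then $\psi_f(x,t,T)=\lambda(S_i(t,y))e^{-\Lambda(y,i,t)}\,Jg(\cdot,T-t)(S_i(t,y))$ with $g(u,s):=\sum_{j\in I}\pi_{ij}(u)e^{-\Lambda(u,j,s)}f(S_j(s,u),j)$, a function built only from $f$, $\lambda$, $\pi_{ij}$ and the semiflows, whose membership in $C_b(Y\times\mathbb{R}_+)$ follows from elementary estimates; Assumption \ref{cnd:f} then applies \emph{directly} to this $g$ and gives the joint continuity of $\psi_f$ in one step. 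The price is that $u_f$ is no longer zero, but it is controlled by the Poisson-like tail bound of Lemma \ref{lem:tech2}: $\pr_{(x,0)}(\eta(T)=n)\le e^{-\ulambda T}(\olambda T)^n/n!$, whence $\|u_f(\cdot,T)\|_\infty\le\|f\|_\infty e^{-\ulambda T}(e^{\olambda T}-1-\olambda T)=O(T^2)$, which is exactly what \eqref{e:prop_uf} asks for. In short: your renewal identity is equivalent to the paper's series, but the paper's truncation at one jump is what turns the joint-continuity proof into a one-application use of Assumption \ref{cnd:f}, whereas $u_f\equiv 0$ makes that step considerably harder and, in your write-up, unproved.
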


Obviously, according to statement  \ref{lem:i} of Lemma \ref{lem:Pt_prop}, if $J$ is Feller (or, in particular, if Assumption \ref{cnd:f} is fulfilled), then $\{P_t\}_{t\in\mathbb{R}_+}$ can be viewed as a contraction semigroup of linear operators on $C_b(X)$, and thus we can consider its weak generator (in the sense specified in Section \ref{sec:generators}). In what follows, this generator will be denoted by $A_P$. Apart from this, we also employ the weak generator $A_Q$ of the semigroup $\{Q_t\}_{t\in\mathbb{R}_+}$ defined by
\begin{equation}
\label{def:Qt}
Q_t f(x):=f(S_i(t,y),i)\quad\text{for}\quad x=(y,i)\in X,\;f\in C_b(X).
\end{equation}
Clearly, $D(A_p)$ and $D(A_Q)$ are then subsets of $C_b(X)$.  Furthermore, having in mind \eqref{defL0}, we see that $L_{0,P}=C_b(X)$ by statement \ref{lem:iii} of Lemma~\ref{lem:Pt_prop}, and also \hbox{$L_{0,Q}=C_b(X)$}, due to the continuity of $S_i(\cdot,y)$, $y\in Y$.

The main idea underlying the proof of our main theorem is that   the generator~$A_P$ can be expressed using $A_Q$ and the operator $W$, determined by \eqref{def:W} (similarly as in \hbox{\cite[Theorem 5.5]{b:davis}}), which is exactly the statement of the lemma below. The proof of this result (given in \hbox{Section}~\ref{sec:proof_lems}) is founded on assertion \ref{lem:ii} of Lemma~\ref{lem:Pt_prop}, which, incidentally, is the reason why we require Assumption \ref{cnd:f} rather than just the Feller property of $J$. It should also be noted that, according to Remark \ref{rem:feller_gw}, the kernel $W$ is Feller under this assumption, which makes the formula below meaningful.
\begin{lemma}\label{lem:main2}
Suppose that Assumption \ref{cnd:f} holds. Then $D(A_P)=D(A_Q)$ and, for every \hbox{$f\in D(A_P)$}, we have
\begin{equation}
\label{e:APQ}
A_P f=A_Q f +\hlambda Wf-\hlambda  f,
\end{equation}
where $\hlambda$ is given by \eqref{def:hlambda}, and $W$ is the operator on $C_b(X)$, induced by the kernel specified in \eqref{def:W}.
\end{lemma}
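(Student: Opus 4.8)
The plan is to exploit the explicit representation of $P_T f$ furnished by Lemma~\ref{lem:Pt_prop}\ref{lem:ii} in order to compare the difference quotients $(P_Tf-f)/T$ and $(Q_Tf-f)/T$ directly, and then to pass to the $w^*$-limit as $T\to 0^+$ term by term, using throughout the standard fact (recalled after \eqref{wlimit}) that $w^*$-convergence of a family in $B_b(X)$ is equivalent to pointwise convergence together with uniform boundedness of the sup-norms.

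First I would fix $f\in C_b(X)$; this entails no loss of generality, since both $D(A_P)$ and $D(A_Q)$ are contained in $L_{0,P}=L_{0,Q}=C_b(X)$. Observing that $Q_Tf(x)=f(S_i(T,y),i)$ and using \eqref{e:P_T_app}, for every $x=(y,i)\in X$ and $T>0$ one gets
\begin{align*}
\frac{P_Tf(x)-f(x)}{T} &= \frac{Q_Tf(x)-f(x)}{T} + \frac{e^{-\Lambda(y,i,T)}-1}{T}\,Q_Tf(x)\\
&\quad + \frac{1}{T}\int_0^T\psi_f(x,t,T)\,dt + \frac{u_f(x,T)}{T}.
\end{align*}
All four summands define functions in $B_b(X)$ --- the first belongs even to $C_b(X)$ by continuity of the semiflows, the second and third are bounded by $\olambda\norma{f}_\infty$ uniformly in $x$ and $T$ in view of \eqref{e:lambda_bounds} and \eqref{e:psi_prop1} (using also $1-e^{-s}\le s$), and the last belongs to $B_b(X)$ by \eqref{e:prop_uf}. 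I would then let $T\to 0^+$ in the last three summands. For the second one, since $h\mapsto\lambda(S_i(h,y))$ is continuous we have $\Lambda(y,i,T)/T\to\lambda(y)=\hlambda(x)$, and $Q_Tf(x)\to f(x)$ by continuity of $S_i(\cdot,y)$ and $f$, so it converges pointwise to $-\hlambda(x)f(x)$; being uniformly bounded, it converges $w^*$ to $-\hlambda f\in C_b(X)$. For the third one, Assumption~\ref{cnd:f} gives the joint continuity of $(t,T)\mapsto\psi_f(x,t,T)$ on $\dpsi$, whence $\sup_{0\le t\le T}|\psi_f(x,t,T)-\psi_f(x,0,0)|\to 0$ and therefore $\frac1T\int_0^T\psi_f(x,t,T)\,dt\to\psi_f(x,0,0)=\hlambda(x)Wf(x)$ by \eqref{e:psi_prop2}; uniform boundedness then upgrades this to $w^*$-convergence to $\hlambda Wf$, which lies in $C_b(X)$ because $W$ is Feller under Assumption~\ref{cnd:f} (Remark~\ref{rem:feller_gw}). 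For the fourth one, \eqref{e:prop_uf} states that $\norma{u_f(\cdot,T)}_\infty/T\to 0$, so it tends to $0$ in sup-norm, hence $w^*$.

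Writing $R(T)$ for the sum of the last three summands, the above computations show $\wlim_{T\to 0}R(T)=\hlambda Wf-\hlambda f$, an element of $C_b(X)$. Since $(Q_Tf-f)/T=(P_Tf-f)/T-R(T)$, the $w^*$-limit as $T\to 0$ of one of these difference quotients exists if and only if that of the other does, and --- because $\hlambda Wf-\hlambda f\in C_b(X)$ --- one such limit belongs to $C_b(X)$ exactly when the other does. By the definition of the weak generator this means precisely that $D(A_P)=D(A_Q)$ and, for $f$ in this common domain, $A_Pf=A_Qf+\hlambda Wf-\hlambda f$, i.e.\ \eqref{e:APQ}.

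I expect the one genuinely delicate point to be the third summand: one has to make sure the joint continuity provided by Lemma~\ref{lem:Pt_prop}\ref{lem:ii} is valid up to the corner $(0,0)$ of $\dpsi$, so that the Ces\`aro-type average $\frac1T\int_0^T\psi_f(x,t,T)\,dt$ indeed converges to $\psi_f(x,0,0)$, and that the uniform bound \eqref{e:psi_prop1} is what converts this pointwise statement into $w^*$-convergence (with the limit landing in $C_b(X)$, which is where the Feller property of $W$ enters). The remaining manipulations are routine applications of the pointwise-plus-uniformly-bounded characterization of $w^*$-limits.
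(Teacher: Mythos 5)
Your proof is correct and follows essentially the same strategy as the paper's: the same explicit representation of $(P_Tf-f)/T$ via Lemma~\ref{lem:Pt_prop}\ref{lem:ii}, the same identification of $\wlim$ of the remainder with $\hlambda Wf-\hlambda f$, and the same use of the pointwise-plus-uniform-bound characterization of $w^*$-convergence. The only cosmetic differences are that the paper factors out $e^{-\Lambda(\cdot,T)}$ in front of $(Q_Tf-f)/T$ (so its remainder $\Delta_f$ carries $f$ rather than your $Q_Tf$), invokes the mean value theorem for integrals where you use the modulus of continuity of $\psi_f$ at $(0,0)$, and applies l'H\^opital where you expand $(e^{-s}-1)/s$ directly — all equivalent.
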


Another fact playing a significant role in the proof of Theorem \ref{thm:main} is related with the \hbox{invertibility} of the operator $G$ on $C_b(X)$ (cf. Remark \ref{rem:feller_gw}), induced by the kernel given by~\eqref{def:G}. Clearly, if $\lambda$ is constant, then $G/\lambda$ is simply the resolvent of the semigroup $\{Q_t\}_{t\in\mathbb{R}_+}$.  As is well known (see \hbox{\cite[Theorem 1.7]{b:dynkin_1965}}), in this case, one has $G/\lambda\,|_{\,C_b(X)}= (\lambda \operatorname{id}-A_Q)^{-1}$, which is a key observation in \cite{b:czapla_erg}. Since this argument fails in the present framework, we will prove (in Section \ref{sec:proof_lems}) the following:

\begin{lemma}\label{lem:main1}
Let $A_Q/\hlambda$ be the operator defined by
$$\left(A_Q/\hlambda\right)f=\frac{A_Q f}{\hlambda}\quad\text{for}\quad f\in D(A_Q),$$
with $\hlambda$ given by \eqref{def:hlambda}.
Then the operator $\operatorname{id}-A_Q/\hlambda: D(A_Q)\to C_b(X)$ is invertible, and its inverse is the operator on $C_b(X)$ induced by the kernel $G$, specified in~\eqref{def:G}. Equivalently, this means that
\begin{gather}
\label{e:inv1}
Gf\in D(A_Q)\quad\text{and}\quad Gf-\frac{A_Q G f}\hlambda=f\quad\text{for any}\quad f\in C_b(X),\\
\label{e:inv2}
Gf-G\left(\frac{A_Q f}{\hlambda}\right)=f\quad \text{for any}\quad f\in D(A_Q).
\end{gather}
\end{lemma}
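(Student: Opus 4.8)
The plan is to establish the two displayed identities \eqref{e:inv1} and \eqref{e:inv2}, which together say precisely that $G$ and $\operatorname{id}-A_Q/\hlambda$ are mutual inverses. The central computation is a change-of-variables argument in the defining integral \eqref{def:G} for $G$. First I would record the semigroup action: for $x=(y,i)\in X$ and $h\geq0$, using the semiflow property $S_i(t,S_i(h,y))=S_i(t+h,y)$ and the cocycle identity $\Lambda(S_i(h,y),i,t)=\Lambda(y,i,t+h)-\Lambda(y,i,h)$, one gets
\begin{equation}
\label{e:QG}
Q_h(Gf)(x)=Gf(S_i(h,y),i)=e^{\Lambda(y,i,h)}\int_h^\infty \lambda(S_i(t,y))e^{-\Lambda(y,i,t)}f(S_i(t,y),i)\,dt,
\end{equation}
where I substituted $t\mapsto t-h$ inside the integral. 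From \eqref{e:QG} one reads off that $h\mapsto Q_h(Gf)(x)$ is differentiable in $h$ (the integrand is continuous, being a composition of continuous maps, and bounded) with
\begin{equation}
\label{e:derivQG}
\frac{d}{dh}\Big|_{h=0}Q_h(Gf)(x)=\lambda(y)\,Gf(x)-\lambda(y)f(x)=\hlambda(x)\big(Gf(x)-f(x)\big).
\end{equation}

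Next I would upgrade this pointwise derivative to a weak-$*$ statement. Since $\norma{Gf}_\infty\leq \ulambda^{-1}\norma{f}_\infty$ and the difference quotients $(Q_h(Gf)-Gf)/h$ are uniformly bounded in $\norma{\cdot}_\infty$ on a neighbourhood of $0$ (by the mean value theorem applied to \eqref{e:QG}, together with \eqref{e:lambda_bounds}), the pointwise convergence established above is in fact weak-$*$ convergence, by the standard characterization recalled after \eqref{wlimit}. Moreover the limit $\hlambda(Gf-f)$ lies in $C_b(X)$: $G$ is Feller (Remark \ref{rem:feller_gw}), $f\in C_b(X)$, and $\hlambda$ is continuous and bounded. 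Hence $Gf\in D(A_Q)$ with $A_Q(Gf)=\hlambda(Gf-f)$, which is exactly the second part of \eqref{e:inv1} after dividing by $\hlambda$; the inclusion $Gf\in D(A_Q)$ is the first part.

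For \eqref{e:inv2}, take $f\in D(A_Q)$. The cleanest route is to apply the Dynkin formula from Remark \ref{rem:inf}\ref{rem:ii} to the semigroup $\{Q_t\}$: $Q_t f-f=\int_0^t Q_s(A_Qf)\,ds$. Multiplying the defining integrand of $G\big(A_Qf/\hlambda\big)(x)$ and integrating by parts (or, equivalently, plugging $Q_s(A_Qf)$ into the formula for $G$ and using Fubini on $D_\psi$-type region $\{0\le s\le t\}$ together with $\frac{d}{dt}e^{-\Lambda(y,i,t)}=-\lambda(S_i(t,y))e^{-\Lambda(y,i,t)}$) collapses the double integral: one finds $G\big(A_Qf/\hlambda\big)(x)=\int_0^\infty \lambda(S_i(t,y))e^{-\Lambda(y,i,t)}\big(f(S_i(t,y),i)-f(y,i)\big)\,dt$ after noting $\int_0^\infty\lambda(S_i(t,y))e^{-\Lambda(y,i,t)}\,dt=1$ (since $\inf\lambda>0$ forces $\Lambda(y,i,t)\to\infty$). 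The first term is $Gf(x)$ and the second is $f(x)$, giving $G(A_Qf/\hlambda)=Gf-f$, i.e. \eqref{e:inv2}. I expect the main obstacle to be the bookkeeping in this last step: justifying the Fubini interchange and the boundary terms at $t=\infty$ (which vanish because $e^{-\Lambda(y,i,t)}\leq e^{-\ulambda t}\to 0$ and $f$ is bounded), and making sure the two identities \eqref{e:inv1}, \eqref{e:inv2} are phrased so that together they yield genuine two-sided invertibility of $\operatorname{id}-A_Q/\hlambda$ on $D(A_Q)$ rather than just a one-sided inverse — but both directions follow from the two computations above since $A_Q/\hlambda$ maps $D(A_Q)$ onto a subset of $C_b(X)$ and $G$ maps $C_b(X)$ into $D(A_Q)$.
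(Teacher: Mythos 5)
Your proposal is correct and matches the paper's proof in all essentials: for \eqref{e:inv1} both arguments compute $Q_h(Gf)$ via the semiflow/cocycle identity and a change of variables, identify the pointwise right-derivative at $h=0$ as $\hlambda(Gf-f)$, and then upgrade to a weak-$*$ limit using the uniform bound on the difference quotients and the Feller property of $G$. For \eqref{e:inv2} the paper runs the integration-by-parts version of your argument, writing $Gf(x)=-\int_0^\infty(\tfrac{d^+}{dt}e^{-\Lambda(y,i,t)})Q_tf(y,i)\,dt$ and using $\tfrac{d^+}{dt}Q_tf=Q_tA_Qf$ from Remark~\ref{rem:inf}\ref{rem:ii}, which is exactly the step you describe as the equivalent alternative to your Dynkin-plus-Fubini rearrangement.
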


Armed with the results above, we are now prepared to prove Theorem \ref{thm:main}. It is worth highlighting here that, although the reasoning below explicitly utilizes only Lemmas \ref{lem:main2} and~\ref{lem:main1}, it fundamentally relies on Remark \ref{rem:inf}, which is valid for $\{P_t\}_{t\in\mathbb{R}_+}$ with $L_{0,P}=C_b(X)$ owing to Lemma \ref{lem:Pt_prop}.

\begin{proof}[Proof of Theorem \ref{thm:main}]
{\bf (a):}\; To prove statement \ref{cnd:a}, suppose that $\mu_*^{\Phi}\in\mathcal{M}_1(X)$ is invariant for $P$, and let $\mu_*^{\Psi}$ be given by \eqref{e:m_psi}. 

We will first show that
\begin{equation}
\label{e:main1}
\<A_P f,\, \mu_*^{\Phi}\tG\>=0\quad\text{for every}\quad f\in D(A_P).
\end{equation}
To this end, let $f\in D(A_P)$ and define $\nu_*:=\mu_*^{\Phi} G$. Taking into account that $GW=P$ (cf.~Remark \ref{rem:gw}), we see that
$$\nu_*WG=(\mu_*^{\Phi} G)WG=(\mu_*^{\Phi} GW)G=(\mu_*^{\Phi} P)G=\mu_*^{\Phi} G=\nu_*,$$
which means that $\nu_*$ is an invariant probability measure of $WG$. Further, from Lemma~\ref{lem:main2} it follows that $f\in D(A_Q)$, and that
\begin{equation}
\label{e:main2}
\frac{A_P f}{\hlambda}=\frac{A_Q f}{\hlambda}+Wf-f.
\end{equation}
Using the $WG$-invariance of $\nu_*$ and identity \eqref{e:inv2}, resulting from Lemma \ref{lem:main1}, we further obtain
\begin{align*}
\<f-\frac{A_Q f}{\hlambda},\,\nu_* \>&= \<f-\frac{A_Q f}{\hlambda},\,\nu_* WG\>= \<G\left(f-\frac{A_Q f}{\hlambda}\right),\,\nu_*W \>\\
&=\<f,\,\nu_* W\>=\<Wf,\,\nu_*\>.
\end{align*}
In view of \eqref{e:main2}, this gives
$$\<\frac{A_P f}{\hlambda},\,\nu_*\>=\< \frac{A_Q f}{\hlambda}+Wf-f ,\, \nu_*\>=0,$$
which finally implies that
\begin{align*}
\<A_P f,\, \mu_*^{\Phi}\tG\>&=\<\tG(A_P f),\, \mu_*^{\Phi}\>=\<G\left(\frac{A_P f}{\hlambda}\right),\, \mu_*^{\Phi} \>=\<\frac{A_P f}{\hlambda},\, \mu_*^{\Phi} G \>\\
&=\<\frac{A_P f}{\hlambda},\, \nu_* \>=0,
\end{align*}
as claimed.

Now, according to Remark \ref{rem:inf}\ref{rem:ii}, for any $f\in D(A_P)$ and $t\geq 0$, we have
$$P_tf\in D(A_P)\quad\text{and}\quad P_t f - f = \int_0^t A_P P_s f\,ds,$$
which, together with \eqref{e:main1}, yields that
\begin{align*}
\<P_t f - f,\, \mu_*^{\Phi} \tG\>=\<\int_0^t A_P P_s f\,ds,\, \mu_*^{\Phi} \tG\>=\int_0^t \<A_P P_s f,\, \mu_*^{\Phi} \tG\> \,ds=0.
\end{align*} 
Since  $C_b(X)\subset w^*-\operatorname{cl}D(A_P)$ due to Remark \ref{rem:inf}\ref{rem:i}, one can easily conclude that, in fact, the above equality holds for all $f\in C_b(X)$. We have therefore shown that
$$\<P_t f,\,\mu_*^{\Phi} \tG\>=\<f,\,\mu_*^{\Phi} \tG\>\quad\text{for all}\quad t\in\mathbb{R}_+,\; f\in C_b(X).$$
This obviously implies that, for any $t\in\mathbb{R}_+$ and $f\in C_b(X)$,
\begin{align*}
\<f, \mu_*^{\Psi} P_t\>=\<P_t f,\, \mu_*^{\Psi}\>= \frac{\<P_t f,\, \mu_*^{\Phi} \tG\>}{\mu_*^{\Phi}\tG(X)}=\frac{\<f,\, \mu_*^{\Phi} \tG\>}{\mu_*^{\Phi}\tG(X)}=\<f,\,\mu_*^{\Psi}\>,
\end{align*}
and thus proves that $\mu_*^{\Psi}$ is invariant for $\{P_t\}_{t\in\mathbb{R}_+}$.

Furthermore, since $\tG\tW=P$ (as emphasized in Remark \ref{rem:gw}) and $\mu_*^{\Phi}$ is invariant for $P$, we see that
\begin{align*}
\mu_*^{\Psi} \tW=\frac{\mu_*^{\Phi} \tG\tW}{\mu_*^{\Phi} \tG(X)}=\frac{\mu_*^{\Phi} P}{\mu_*^{\Phi} \tG(X)}=\frac{\mu_*^{\Phi}}{\mu_*^{\Phi} \tG(X)},
\end{align*}
whence, in particular,
$$\mu_*^{\Psi} \tW (X)=\frac{1}{\mu_*^{\Phi} \tG(X)}.$$
These two observations finally yield that
$$\frac{\mu_*^{\Psi} \tW}{\mu_*^{\Psi} \tW(X)}=\mu_*^{\Phi} \tG(X) \cdot \mu_*^{\Psi} \tW=\mu_*^{\Phi},$$
as claimed in \eqref{e:m_phi}.

{\bf (b):}\; We now proceed to the proof of claim \ref{cnd:b}. For this aim, suppose that $\mu_*^{\Psi}\in\mathcal{M}_1(X)$ is invariant for $\{P_t\}_{t\in\mathbb{R}_+}$ and consider the measure $\mu_*^{\Phi}$ defined by \eqref{e:m_phi}.

Let us begin with showing that
\begin{equation}
\label{e:main3}
\<\tW G f, \mu_*^{\Psi}\>=\<\hlambda f, \mu_*^{\Psi}\>\quad\text{for every} \quad f\in C_b(X).
\end{equation}
To do this, let $f\in C_b(X)$ and define $g:=Gf$. Clearly, lemmas \ref{lem:main2} and \ref{lem:main1} guarantee that $g\in D(A_Q)=D(A_P)$. Taking into account statement \ref{rem:ii} of Remark \ref{rem:inf} and the fact that~$\mu_*^{\Psi}$ is invariant for $\{P_t\}_{t\in\mathbb{R}_+}$, we infer that
\begin{align*}
\<A_P g,\, \mu_*^{\Psi}\>&=\int_0^1 \<A_P g,\, \mu_*^{\Psi}\>\,ds=\int_0^1 \<P_s A_P g,\, \mu_*^{\Psi}\>\,ds=\<\int_0^1 P_s A_P g \,ds,\,\mu_*^{\Psi}\>\\
&=\<P_1 g - g,\, \mu_*^{\Psi}\>=\<g,\, \mu_*^{\Psi}P_1\>-\<g,\,\mu_*^{\Psi}\>=0.
\end{align*}
Hence, referring to Lemma \ref{lem:main2}, we get
$$\<A_Q g + \hlambda Wg -\hlambda g,\,\mu_*^{\Psi}\>=\<A_P g,\, \mu_*^{\Psi}\>=0,$$
which gives
$$\<\tW g, \,\mu_*^{\Psi}\>=\<\hlambda W g, \,\mu_*^{\Psi}\>=\< \hlambda g - A_Q g,\,\mu_*^{\Psi}\>
=\<\hlambda\left(g-\frac{A_Q g}{\hlambda}\right),\, \mu_*^{\Psi}\>.$$
Eventually, having in mind that $g=Gf$ and using \eqref{e:inv1}, following from Lemma~\ref{lem:main1}, we obtain
$$\<\tW Gf, \,\mu_*^{\Psi}\>= \<\hlambda\left(Gf-\frac{A_Q(Gf)}{\hlambda}\right),\, \mu_*^{\Psi}\>=\<\hlambda f,\,\mu_*^{\Psi}\>,$$
which is the desired claim.

If we now fix an arbitrary $f\in C_b(X)$ and apply \eqref{e:main3} with $f/\hlambda$ in place of $f$,  then we obtain
$$\<f,\,\mu_*^{\Psi}\>=\<\tW G\left(\frac{f}{\hlambda}\right),\,\mu_*^{\Psi}\>=\<\tW \tG f,\,\mu_*^{\Psi}\>=\<f,\, \mu_*^{\Psi} \tW \tG\>,$$
which shows that $\mu_*^{\Psi}$ is invariant for $\tW \tG$. From this and the identity $\tG \tW=P$ it follows that
$$(\mu_*^{\Psi} \tW )P=(\mu_*^{\Psi} \tW)\tG \tW=(\mu_*^{\Psi} \tW\tG) \tW=\mu_*^{\Psi} \tW,$$
which, in turn, gives
$$\mu_*^{\Phi} P=\frac{\mu_*^{\Psi} \tW P}{\mu_*^{\Psi}\tW(X)}=\frac{\mu_*^{\Psi} \tW}{\mu_*^{\Psi}\tW(X)}=\mu_*^{\Phi}.$$
Hence $\mu_*^{\Phi}$ is indeed invariant for $P$.

Finally, it remains to show that $\mu_*^{\Psi}$ can be represented according to \eqref{e:m_psi}. To see this, we again use the fact that $\mu_*^{\Psi}$ is invariant for $\tW \tG$, to obtain
$$\mu_*^{\Phi}\tG=\frac{\mu_*^{\Psi} \tW \tG}{\mu_*^{\Psi}\tW(X)}=\frac{\mu_*^{\Psi}}{\mu_*^{\Psi}\tW(X)},$$
which, in particular, implies that
$$\mu_*^{\Phi}\tG(X)=\frac{1}{\mu_*^{\Psi}\tW(X)}.$$
Consequently, it now follows that
$$\frac{\mu_*^{\Phi}\tG}{\mu_*^{\Phi}\tG(X)}=\mu_*^{\Psi}\tW(X)\cdot \mu_*^{\Phi} \tG=\mu_*^{\Psi}.$$
This completes the proof of the theorem.
\end{proof}

\section{Proofs of the auxiliary results} \label{sec:proof_aux}

In this part of the paper we provide the proofs of Lemmas \ref{lem:Pt_prop}--\ref{lem:main1}, which have been used to prove Theorem \ref{thm:main}. 
\subsection{Proof of Lemma \ref{lem:Pt_prop}}\label{sec:proof_props}
Let us begin this section with defining
\begin{equation}
\label{def:eta}
\eta(t):=\max\{n\in\n_0:\, \tau_n\leq t\}\quad\text{for}\quad t\in\mathbb{R}_+.
\end{equation}
Obviously, $\{\eta(t)=n\}=\{\tau_n\leq t <\tau_{n+1}\}$ for any $t\in\mathbb{R}_+$ and $n\in\n_0$. 

Although \eqref{def:eta} generally does not define a Poisson process (unless $\lambda$ is constant), using assumption~\eqref{e:lambda_bounds}, one can find an upper limit of the~probability of $\{\eta(t)=n\}$ close to the corresponding probability in such a process, which is crucial in the proof of Lemma~\ref{lem:Pt_prop}. This is done in Lemma \ref{lem:tech2}, which relies on the following observation:

\begin{lemma}\label{lem:tech1}
For any $x\in X$, $t\in\mathbb{R}_+$ and $n\in\n_0$,
\begin{equation}
\label{as_ind}
\ew_{(x,0)}\left[e^{\ulambda \tau_n}\mathbbm{1}_{\{\tau_n\leq t\}} \right]\leq \frac{(\olambda t)^n}{n!}.
\end{equation}
\end{lemma}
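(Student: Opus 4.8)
The plan is to prove \eqref{as_ind} by induction on $n$, exploiting the recursive structure of $\bar{\Phi}$ together with the explicit form \eqref{def:P_bar} of the transition kernel $\bar{P}$ and the two-sided bound \eqref{e:lambda_bounds}. The base case $n=0$ is immediate: under $\pr_{(x,0)}$ we have $\tau_0=0$ a.s., so the left-hand side equals $\ew_{(x,0)}[\mathbbm{1}_{\{0\le t\}}]=1=(\olambda t)^0/0!$ for every $t\ge 0$.

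For the inductive step, assume \eqref{as_ind} holds for some $n\in\n_0$, uniformly in $x\in X$ and $t\in\mathbb{R}_+$. Writing $x=(y,i)$, I would condition on $\bar{\Phi}_1=(Y_1,\xi_1,\tau_1)$ and use the Markov property of $\bar{\Phi}$, noting that $\tau_{n+1}-\tau_1$ under $\pr$ conditioned on $\bar{\Phi}_1=(y',i',\tau_1)$ has the same law as $\tau_n$ under $\pr_{(y',i',0)}$ (since the jump-time increments depend only on the post-jump spatial location, not on the accumulated time, as reflected in \eqref{e:dist_tau}). Concretely,
\begin{align*}
\ew_{(x,0)}\!\left[e^{\ulambda\tau_{n+1}}\mathbbm{1}_{\{\tau_{n+1}\le t\}}\right]
&=\ew_{(x,0)}\!\left[e^{\ulambda\tau_1}\mathbbm{1}_{\{\tau_1\le t\}}\,\ew\!\left[e^{\ulambda(\tau_{n+1}-\tau_1)}\mathbbm{1}_{\{\tau_{n+1}-\tau_1\le t-\tau_1\}}\,\big|\,\bar{\Phi}_1\right]\right].
\end{align*}
By the inductive hypothesis applied to the inner conditional expectation (with the spatial location $Y_1$ as the starting point and time horizon $t-\tau_1$), the inner term is bounded by $(\olambda(t-\tau_1))^n/n!\le(\olambda t)^n/n!$ on the event $\{\tau_1\le t\}$; hence it suffices to show
$$\ew_{(x,0)}\!\left[e^{\ulambda\tau_1}\mathbbm{1}_{\{\tau_1\le t\}}\right]\le \olambda t.$$
Here I would use \eqref{def:P_bar} to write this expectation as $\int_0^t \lambda(S_i(h,y))e^{\ulambda h}e^{-\Lambda(y,i,h)}\,dh$; since $\Lambda(y,i,h)\ge\ulambda h$ by the lower bound in \eqref{e:lambda_bounds}, the factor $e^{\ulambda h}e^{-\Lambda(y,i,h)}\le 1$, and since $\lambda(S_i(h,y))\le\olambda$, the integrand is bounded by $\olambda$, giving the bound $\olambda t$. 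Combining, $\ew_{(x,0)}[e^{\ulambda\tau_{n+1}}\mathbbm{1}_{\{\tau_{n+1}\le t\}}]\le \olambda t\cdot(\olambda t)^n/n! = (\olambda t)^{n+1}/(n+1)!$ once one checks the elementary inequality $(\olambda t)^{n+1}/n!$ needs sharpening — in fact the cleaner route is to keep the $t-\tau_1$ dependence: bound the inner term by $(\olambda(t-\tau_1))^n/n!$ and then compute $\int_0^t \olambda\,(\olambda(t-s))^n/n!\,ds = (\olambda t)^{n+1}/(n+1)!$ exactly.

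The main obstacle I anticipate is handling the time-shift in the conditioning argument rigorously: one must verify that, given $\bar{\Phi}_1=(y',i',s')$, the increment $\tau_{n+1}-\tau_1$ is conditionally independent of $\tau_1$ and distributed as $\tau_n$ started from $(y',i',0)$. This is where the precise structure of $\bar{P}$ in \eqref{def:P_bar} — namely that it advances the time coordinate by the inter-jump time $t$ and that this increment's law depends only on the spatial pair $(S_i(t,y),\cdot)$ — does the work, together with the canonical construction \eqref{e:canonical} of $\bar{\Phi}$. Once this shift-invariance is in place, the induction closes cleanly, and the exact integral $\int_0^t (\olambda(t-s))^n/n!\,\olambda\,ds=(\olambda t)^{n+1}/(n+1)!$ delivers the stated bound.
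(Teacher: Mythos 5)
Your proof is correct, and it works, but it runs the induction in the opposite direction from the paper. You peel off the \emph{first} jump: you condition on $\bar{\Phi}_1$, use the Markov property together with the shift-invariance of the time coordinate (readable from \eqref{def:P_bar} and \eqref{e:dist_tau}) to identify $\tau_{n+1}-\tau_1$ given $\bar{\Phi}_1$ with a fresh copy of $\tau_n$ started at $(Y_1,\xi_1,0)$, apply the inductive hypothesis to that inner conditional expectation with the shortened horizon $t-\tau_1$, and finally integrate against the density of $\tau_1$, yielding $\int_0^t \olambda\,(\olambda(t-s))^n/n!\,ds=(\olambda t)^{n+1}/(n+1)!$. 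The paper instead peels off the \emph{last} jump: it conditions on $\bar{\Phi}_n$, bounds $\ew_{\bar x}\bigl[e^{\ulambda\tau_{n+1}}\mathbbm{1}_{\{\tau_{n+1}\le T\}}\mid\bar{\Phi}_n\bigr]$ directly using the explicit conditional density of $\tau_{n+1}$ given $\bar{\Phi}_n$ (getting $\olambda\int_0^T e^{\ulambda\tau_n}\mathbbm{1}_{\{\tau_n\le t\}}\,dt$), takes expectations, and then applies the inductive hypothesis inside the integral. The paper's route is slightly more economical: it never has to verify the time-shift/homogeneity property you flag as the ``main obstacle,'' because conditioning on $\bar{\Phi}_n$ only requires the one-step conditional density, not a full restart of the chain. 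Your renewal-style decomposition is equally valid and perhaps more intuitive to anyone used to Poisson-process arguments, but it does carry the extra burden of justifying that $\tau_{n+1}-\tau_1$ given $\bar{\Phi}_1=(y',i',s')$ is distributed as $\tau_n$ under $\pr_{(y',i',0)}$ — a check that is true (it follows from the additive time coordinate in $\bar P$) but which you correctly note must be made explicit. You also caught the near-miss where bounding $(\olambda(t-\tau_1))^n$ crudely by $(\olambda t)^n$ loses the factor $1/(n+1)$; keeping the $t-\tau_1$ dependence and integrating exactly is the right fix, and it is arithmetically the mirror image of the paper's $\olambda\int_0^T(\olambda t)^n/n!\,dt$ after the substitution $s\mapsto t-s$.
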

\begin{proof}
The inequality is obvious for $n=0$. 

According to \eqref{e:dist_tau}, for every $n\in\n_0$ and each $(y,i,s)\in\bar{X}$, the conditional probability density function of $\tau_{n+1}$ given $\bar{\Phi}_n=(y,i,s)$ is of the form
$$f_{\tau_{n+1}\,|\,\bar{\Phi}_n}(t\,|\,(y,i,s))=\lambda(S_i(t-s,y)) e^{-\Lambda(y,i,t-s)}\mathbbm{1}_{[s,\infty)}(t)\quad\text{for}\quad t\in\mathbb{R}_+.$$

Let $x=(y,i)\in X$ and put $\bar{x}:=(x,0)\in\bar{X}$. We shall proceed by induction. Taking into account \eqref{e:lambda_bounds}, for $n=1$ we have

\begin{align}
\begin{split}
\label{base_step}
\ew_{\bar{x}}\left[e^{\ulambda \tau_1}\mathbbm{1}_{\{\tau_1\leq T\}} \right]&=\int_0^T e^{\ulambda t} f_{\tau_1\,|\,\bar{\Phi}_{0}}(t\,|\,\bar{x})\,dt\leq  \int_0^T e^{\ulambda t}\,\olambda e^{-\ulambda t}\,dt=\olambda T \quad\text{for}\quad T\in\mathbb{R}_+.
\end{split}
\end{align}
Now, suppose inductively that \eqref{as_ind} holds for some arbitrarily fixed $n\in\n$ and all $t\in\mathbb{R}_+$. Then, for every $T\in\mathbb{R}_+$, we can write
\begin{align*}
\ew_{\bar{x}}\left[e^{\ulambda \tau_{n+1}}\mathbbm{1}_{\{\tau_{n+1}\leq T\}}\,|\,\bar{\Phi}_n \right]&=\int_0^T e^{\ulambda t} f_{\tau_{n+1}\,|\,\bar{\Phi}_{n}}(t\,|\,\bar{\Phi}_n)\,dt\leq \int_0^T e^{\ulambda t}\, \olambda e^{-\ulambda(t-\tau_n)}\mathbbm{1}_{[\tau_n,\infty)}(t)\,dt\\
&=\olambda\int_0^T e^{\ulambda\tau_n}\mathbbm{1}_{\{\tau_n\leq t\}}\,dt.
\end{align*}
Taking the expectation of both sides of this inequality and, further, using the inductive hypothesis gives

\begin{align*}
\ew_{\bar{x}}\left[e^{\ulambda \tau_{n+1}}\mathbbm{1}_{\{\tau_{n+1}\leq T\}}\right]
&\leq \olambda\int_0^T \ew_{\bar{x}}\left[e^{\ulambda\tau_n}\mathbbm{1}_{\{\tau_n\leq t\}}\right] dt 
\leq \olambda\int_0^T \frac{(\olambda t)^n}{n!}
\,dt \\
&= \frac{\olambda^{n+1}}{n!} \int_0^T t^n\,dt=\frac{(\olambda T)^{n+1}}{(n+1)!},
\end{align*}
which ends the proof.
\end{proof}

\begin{lemma}\label{lem:tech2}
For any $x\in X$, $t\in\mathbb{R}_+$ and $n\in\n_0$,
$$\pr_{(x,0)}(\eta(t)=n)\leq e^{-\ulambda t} \frac{(\olambda t)^n}{n!}.$$
\end{lemma}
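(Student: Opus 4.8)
The plan is to use the event identity $\{\eta(t)=n\}=\{\tau_n\le t<\tau_{n+1}\}$ together with Lemma~\ref{lem:tech1}, exploiting the fact that on the event $\{\tau_n\le t\}$ one has $e^{\ulambda t}\ge e^{\ulambda\tau_n}$, so that the indicator of $\{\tau_n\le t\}$ is controlled by the exponentially weighted quantity already estimated. Concretely, since $\{\eta(t)=n\}\subset\{\tau_n\le t\}$, I would write
\begin{align*}
\pr_{(x,0)}(\eta(t)=n)\le \pr_{(x,0)}(\tau_n\le t)=\ew_{(x,0)}\!\left[\mathbbm{1}_{\{\tau_n\le t\}}\right]
=\ew_{(x,0)}\!\left[e^{-\ulambda\tau_n}e^{\ulambda\tau_n}\mathbbm{1}_{\{\tau_n\le t\}}\right].
\end{align*}
On $\{\tau_n\le t\}$ we have $e^{-\ulambda\tau_n}\ge e^{-\ulambda t}$, which unfortunately points the wrong way; so instead the right move is to bound $\pr_{(x,0)}(\eta(t)=n)$ directly by conditioning on $\bar\Phi_n$ and using that, given $\bar\Phi_n=(y,i,\tau_n)$, the probability that $\tau_{n+1}>t$ equals $e^{-\Lambda(y,i,t-\tau_n)}\le e^{-\ulambda(t-\tau_n)}$ on $\{\tau_n\le t\}$ by \eqref{e:lambda_bounds}.

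Thus the key step is: using the Markov property of $\bar\Phi$ and \eqref{e:dist_tau},
\begin{align*}
\pr_{(x,0)}(\eta(t)=n)&=\ew_{(x,0)}\!\left[\mathbbm{1}_{\{\tau_n\le t\}}\,\pr_{(x,0)}(\tau_{n+1}>t\mid\bar\Phi_n)\right]
=\ew_{(x,0)}\!\left[\mathbbm{1}_{\{\tau_n\le t\}}\,e^{-\Lambda(Y_n,\xi_n,t-\tau_n)}\right]\\
&\le \ew_{(x,0)}\!\left[\mathbbm{1}_{\{\tau_n\le t\}}\,e^{-\ulambda(t-\tau_n)}\right]
=e^{-\ulambda t}\,\ew_{(x,0)}\!\left[e^{\ulambda\tau_n}\mathbbm{1}_{\{\tau_n\le t\}}\right].
\end{align*}
Now Lemma~\ref{lem:tech1} applies verbatim to the last expectation, giving the bound $e^{-\ulambda t}(\olambda t)^n/n!$, which is exactly the claim. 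The case $n=0$ is either covered by the same computation (with the convention $\tau_0=0$, so $\{\tau_0\le t\}$ is certain and the expectation equals $1$) or checked directly: $\pr_{(x,0)}(\eta(t)=0)=\pr_{(x,0)}(\tau_1>t)=e^{-\Lambda(y,i,t)}\le e^{-\ulambda t}$.

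The only delicate point is justifying the conditional-probability identity $\pr_{(x,0)}(\tau_{n+1}>t\mid\bar\Phi_n)=e^{-\Lambda(Y_n,\xi_n,t-\tau_n)}$ on $\{\tau_n\le t\}$; this follows from \eqref{e:dist_tau}, since that formula gives $\pr(\tau_{n+1}\le t\mid\bar\Phi_n=(y,i,s))=\mathbbm{1}_{[s,\infty)}(t)(1-e^{-\Lambda(y,i,t-s)})$, and on $\{\tau_n\le t\}$ the indicator $\mathbbm{1}_{[\tau_n,\infty)}(t)$ equals $1$. Everything else is a direct substitution plus Lemma~\ref{lem:tech1}, so I expect no real obstacle here; the main conceptual content was already isolated in Lemma~\ref{lem:tech1}, and this lemma is essentially its immediate corollary after inserting the one-step survival probability.
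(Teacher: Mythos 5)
Your proof is correct and follows exactly the same route as the paper: express $\{\eta(t)=n\}$ as $\{\tau_n\le t<\tau_{n+1}\}$, condition on $\bar\Phi_n$, use \eqref{e:dist_tau} and \eqref{e:lambda_bounds} to bound the one-step survival probability by $e^{-\ulambda(t-\tau_n)}$, and apply Lemma~\ref{lem:tech1}. The initial digression about the crude bound $\pr(\tau_n\le t)$ is correctly discarded, and the rest matches the paper's argument step for step.
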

\begin{proof}
Let $x\in X$, $\bar{x}:=(x,0)$, $t\in\mathbb{R}_+$, $n\in\n_0$, and observe that
$$\pr_{\bar{x}}(\eta(t)=n)=\pr_{\bar{x}}(\tau_n\leq t <\tau_{n+1})=\ew_{\bar{x}}\left[ \mathbbm{1}_{\{\tau_n\leq t\}}\pr_{\bar{x}}(\tau_{n+1}>t\,|\, \bar{\Phi}_n)\right].$$
From \eqref{e:dist_tau} and \eqref{e:lambda_bounds} it follows that
$$\pr_{\bar{x}}(\tau_{n+1}>t\,|\,\bar{\Phi}_n)= e^{-\Lambda(Y_n,\,\xi_n,\,t-\tau_n)}\leq e^{-\ulambda(t-\tau_n)} \quad \text{on}\quad \{\tau_n\leq t\}.$$
Having this in mind and applying Lemma \ref{lem:tech1} we therefore get
$$\pr_{\bar{x}}(\eta(t)=n)\leq \ew_{\bar{x}}\left[\mathbbm{1}_{\{\tau_n\leq t\}} e^{-\ulambda(t-\tau_n)}\right]=e^{-\ulambda t}\, \ew_{\bar{x}} \left[ \mathbbm{1}_{\{\tau_n\leq t\}} e^{\ulambda\tau_n}\right]\leq e^{-\ulambda t}\frac{(\olambda t)^n}{n!},$$
which is the desired claim.
\end{proof}

Having established this, we are now in a position to conduct the announced proof.

\begin{proof}[Proof of Lemma \ref{lem:Pt_prop}]
In what follows, we will write $\bar{x}:=(x,0)$ for any given $x\in X$. 

Let $f\in B_b(X)$ and $T\in\mathbb{R}_+$. Then, appealing to \eqref{e:efp} and \eqref{def:pdmp}, for every $x\in X$, we get
\begin{align}
\begin{split}
\label{e:pt_ew}
P_T f(x)&=\ew_{\bar{x}}f(Y(T),\xi(T))=\sum_{n=0}^{\infty} \ew_{\bar{x}}\left[f(S_{\xi_n}(T-\tau_n,Y_n),\xi_n)\mathbbm{1}_{\left\{\eta(T)=n\right\}} \right]\\
&=\sum_{n=0}^{\infty} \ew_{\bar{x}}\left[\mathbbm{1}_{[0,T]}(\tau_n)\,f(S_{\xi_n}(T-\tau_n,Y_n),\xi_n)\,\mathbbm{1}_{(T,\infty)}(\tau_{n+1}) \right],
\end{split}
\end{align}
with $\eta(\cdot)$ defined by \eqref{def:eta}. On the other hand, in view of \eqref{e:canonical}, it is clear that, for any~$x\in X$, $g,h\in B_b(\bar{X})$ and $n\in\n_0$,
\begin{align}
\begin{split}
\label{e:dod}
\ew_{\bar{x}}[g(\bar{\Phi}_n)h(\bar{\Phi}_{n+1})]&=\int_{\bar{X}}\int_{\bar{X}} g(w)h(z)\,\bar{P}(w,dz)\bar{P}^n(\bar{x},dw)\\
&=\int_{\bar{X}}g(w)\bar{P}h(w)\bar{P}^n(\bar{x},dw)=\bar{P}^n(g\bar{P}h)(\bar{x}).
\end{split}
\end{align}
Hence, we can write
\begin{align}
\begin{split}
\label{e:decomp}
P_T f(x)&=\sum_{n=0}^{\infty}\ew_{\bar{x}}[g_T(Y_n,\xi_n,\tau_n)h_T(Y_{n+1},\xi_{n+1},\tau_{n+1})]\\&=\sum_{n=0}^{\infty}\bar{P}^n(g_T\bar{P}h_T)(\bar{x})\quad\text{for every}\quad x\in X,
\end{split}
\end{align}
where $g_T,h_T\in B_b(\bar{X})$ are given by
\begin{equation}
\label{def:gh}
g_T(u,j,s):=\mathbbm{1}_{[0,T]}(s)f(S_j(T-s,u),j)\quad\text{and}\quad
h_T(u,j,s):=\mathbbm{1}_{(T,\infty)}(s)
\end{equation}
for any $u\in Y$, $j\in I$ and $s\in\mathbb{R}_+$.

\textbf{\ref{lem:i}}: Suppose that $J$ is Feller, $f\in C_b(X)$ and let $T\in\mathbb{R}_+$. The aim is to prove that $P_T f$ is continuous.

Let us first observe that, for any function $\varphi\in B_b(\bar{X})$ such that $X\ni x\mapsto\varphi(x,s)$ is continuous for every $s\geq 0$ (which is obviously the case for $g_T$ and $h_T$), the map \hbox{$X\ni x\mapsto \bar{P}\varphi(x,s)$} is continuous for each $s\geq 0$ as well. Indeed, fix $s\in\mathbb{R}_+$, and note that, for any \hbox{$i,j\in I$} and \hbox{$t\in\mathbb{R}_+$}, the map $Y \ni y \mapsto J(\pi_{ij} \varphi(\cdot,j,t))(y)$ is continuous since the kernel $J$ is Feller, and~\hbox{$\pi_{ij},\, \varphi(\cdot,j,t)\in C_b(Y)$}. Consequently, taking into account the continuity of both $\lambda$  and the semiflows, it follows that, for all $j\in I$ and $t\geq 0$, the maps 
$$X\ni (y,i) \mapsto \lambda(S_i(t,y)) e^{-\Lambda(y,i,t)}J(\pi_{ij}\varphi(\cdot,j,s+t))(S_i(t,y))$$
are continuous. Moreover, all these maps are bounded by $\olambda\, e^{-\ulambda t}\norma{\varphi}_{\infty}$. Hence, using the  Lebesgue dominated convergence theorem, we can conclude that the function
$$X\ni (y,i)=x \mapsto \bar{P}\varphi(x,s)=\sum_{j\in I}\int_0^{\infty} \lambda(S_i(t,y)) e^{-\Lambda(y,i,t)}\, J(\pi_{ij} \varphi(\cdot,j,s+t))(S_i(t,y))\,dt$$
is continuous, as claimed.

In light of the observation above, all the maps $X\ni x\mapsto \bar{P}^n(g_T\bar{P}h_T)(\bar{x})$, $n\in \n_0$, are continuous. Furthermore, from \eqref{e:dod} and Lemma \ref{lem:tech2} it follows that
\begin{align}
\begin{split}
\label{e:com_bound}
|\bar{P}^n(g_T\bar{P}h_T)(\bar{x})|&=\left|\ew_{\bar{x}}[f(S_{\xi_n}(T-\tau_n,Y_n),\xi_n))\mathbbm{1}_{\{\eta(T)=n\}} ]\right|\\
&\leq \norma{f}_{\infty}\pr_{\bar{x}}(\eta(t)=n)\leq \norma{f}_{\infty}e^{-\ulambda T}\frac{(\olambda T)^n}{n!}
\end{split}
\end{align}
for all $x\in X$ and $n\in\n_0$. Hence, the continuity of $P_Tf$ can be now deduced by applying the discrete version of the Lebesgue dominated convergence theorem to \eqref{e:decomp}.
\vspace{0.2cm}

\textbf{\ref{lem:ii}}: Let us define
$$u_f(x,T):=\sum_{n=2}^{\infty}\bar{P}^n(g_T\bar{P}h_T)(\bar{x})\quad\text{for}\quad x\in X,\; T\in\mathbb{R}_+,\vspace{-0.5cm}$$
and
\begin{align*}
\psi_f(x,t,T)&:=\lambda(S_i(t,y))e^{-\Lambda(y,i,t)}\int_Y\sum_{j\in I}  \pi_{ij}(u) e^{-\Lambda(u,j,T-t)}\\
&\quad\times f(S_j(T-t,u),j) J(S_i(t,y),du)\quad\text{for}\quad x=(y,i)\in X,\; (t,T)\in \dpsi.
\end{align*}
Obviously, $u_f(\cdot, T)$ and $\psi_f(\cdot, t, T)$ are Borel measurable for any $T\in\mathbb{R}_+$ and $0\leq t\leq T$. 

Now, fix $T\in\mathbb{R}_+$. Then, according to \eqref{e:decomp}, we have
\begin{equation}
\label{e:P_T_app1} 
P_T f(x)=g_T\bar{P}h_T(\bar{x})+\bar{P}(g_T\bar{P}h_T)(\bar{x})+u_f(x,T)\quad\text{for}\quad x\in X.
\end{equation}
Keeping in mind \eqref{def:P_bar} and \eqref{def:gh}, we see that for any $x=(y,i)\in X$ and $s\in\mathbb{R}_+$,
\begin{align}
\begin{split}
\label{e:gph}
g_T\bar{P}h_T(x,s)&=\mathbbm{1}_{[0,T]}(s)f(S_i(T-s,y),i)\int_0^{\infty} \lambda(S_i(t,y)) e^{-\Lambda(y,i,t)} \mathbbm{1}_{(T,\infty)}(s+t)\,dt
\\
&=\mathbbm{1}_{[0,T]}(s) e^{-\Lambda(y,\,i,\,T-s)} f(S_i(T-s,y),i),
\end{split}
\end{align}
which, in particular, gives
\begin{equation}
\label{e:P_T_app2}
g_T\bar{P}h_T(\bar{x})=g_T\bar{P}h_T(x,0)= e^{-\Lambda(y,i,T)} f(S_i(T,y),i) .
\end{equation}
Appealing to \eqref{e:gph}, we can also conclude that
\begin{align*}
\bar{P}&(g_T\bar{P}h_T)(\bar{x})=\int_0^{\infty} \lambda(S_i(t,y)) e^{-\Lambda(y,i,t)} \int_Y \sum_{j\in I} \pi_{ij}(u)(g_T\bar{P}h_T)(u,j,t)\,J(S_i(t,y),du)\,dt\\
&=\int_0^T\lambda(S_i(t,y)) e^{-\Lambda(y,i,t)} \int_Y \sum_{j\in I} \pi_{ij}(u) e^{-\Lambda(u,\,j,\,T-t)} f(S_j(T-t,u),j) \,J(S_i(t,y),du)\,dt\\
&=\int_0^T \psi_f((y,i),t,T)\,dt \quad\text{for all}\quad x=(y,i)\in X,
\end{align*}
which, together with \eqref{e:P_T_app1} and \eqref{e:P_T_app2} implies \eqref{e:P_T_app}.

Further, referring to \eqref{e:com_bound}, we obtain
\begin{align}
\begin{split}
\label{e:est_rest}
\frac{|u_f(x,T)|}{T}&\leq \norma{f}_{\infty}\frac{1}{T}e^{-\ulambda T}\sum_{n=2}^{\infty} \frac{(\olambda T)^n}{n!}=\norma{f}_{\infty}\frac{1}{T}e^{-\ulambda T}(e^{\olambda T}-1-\olambda T)\\
&=\norma{f}_{\infty} e^{-\ulambda T}\left(\frac{e^{\olambda T}-1}{T}-\olambda  \right) \quad \text{for all}\quad x\in X,\; T\in\mathbb{R}_+,
\end{split}
\end{align}
which shows that $u_f$ fulfills the conditions specified in~\eqref{e:prop_uf}. In turn, the properties of the function~$\psi_f$ stated in \eqref{e:psi_prop1} and \eqref{e:psi_prop2} follow directly from its definition and~\eqref{e:lambda_bounds}.

Now, suppose that $J$ satisfies Assumption \ref{cnd:f}, $f\in C_b(X)$, and let  $x=(y,i)\in X$. To prove that the map $\dpsi \ni (t,T)\mapsto \psi(x,t,T)$ is jointly continuous, define
$$g(u,t):=\sum_{j\in I} \pi_{ij}(u) e^{-\Lambda(u,j,t)} f(S_j(t,u),j) \quad\text{for}\quad u\in Y,\;t\in\mathbb{R}_+.$$
Then
$$\psi_f(x,t,T)=\lambda(S_i(t,y))e^{-\Lambda(y,i,t)} Jg(\cdot,T-t)(S_i(t,y))\quad\text{for}\quad (t,T)\in \dpsi.$$
Taking into account \eqref{e:lambda_bounds}, for every $j\in I$ and any $(u,t), (u_0,t_0)\in Y\times\mathbb{R}_+$, we have
$$|\Lambda(u,j,t)-\Lambda(u_0,j,t_0)|\leq \int_0^{t_0} |\lambda(S_j(h,u))-\lambda(S_j(h,u_0)) |\,dh +\olambda |t-t_0|.$$
Hence, in view of the continuity of $\lambda$ and $S_j(h,\cdot)$ for any $h\geq 0$, we can conclude (by applying the Lebesgue dominated convergence theorem) that $(u,t)\mapsto \Lambda(u,j,t)$ is jointly continuous  for each $j\in I$. This, together with the continuity of $f$, $S_j$ and $\pi_{ij}$, \hbox{$j\in J$}, shows that $g$ is jointly continuous as well, which, in turn, guarantees that $g\in C_b(Y\times\mathbb{R}_+)$, since $|g(u,t)|\leq \norma{f}_{\infty}$ for all $(u,t)\in Y\times\mathbb{R}_+$. Eventually, it now follows from Assumption \ref{cnd:f} and the continuity of $S_i(\cdot,y)$ that the map 
$\dpsi\ni(t,T)\mapsto Jg(\cdot, T-t)(S_i(t,y))$ is jointly continuous, and thus so is $\dpsi\ni (t,T)\mapsto \psi_f(x,t,T)$.

\vspace{0.2cm}
\textbf{\ref{lem:iii}}: Statement \ref{lem:iii} follows immediately from \ref{lem:ii}.
\end{proof}

\subsection{Proofs of Lemmas \ref{lem:main2} and \ref{lem:main1}} \label{sec:proof_lems}
Before we proceed, let us recall that $A_P$ and $A_Q$ stand for the weak generators of $\{P_t\}_{t\in\mathbb{R}_+}$ and $\{Q_t\}_{t\in\mathbb{R}_+}$, respectively, considered as contraction semigroups on $C_b(X)$, where $\{Q_t\}_{t\in\mathbb{R}_+}$ is defined by \eqref{def:Qt}. Also, keep in mind that $G$ and $W$ are the kernels specified in \eqref{def:G} and~\eqref{def:W}, respectively, while $\hat{\lambda}$ is given by \eqref{def:hlambda}.
\begin{proof}[Proof of Lemma \ref{lem:main2}]
Let $f\in C_b(X)$ and define
\begin{equation}
\label{def:delta}
\Delta_f(x,T):=\frac{1}{T}\int_0^T \psi_f(x,t,T)\,dt-\frac{1-e^{-\Lambda(x,T)}}{T}f(x)+\frac{u_f(x,T)}{T}
\end{equation}
for any $x\in X$ and $T>0$, where $\psi_f$ is the function specified in statement \ref{lem:ii} of Lemma~\ref{lem:Pt_prop}. Recall that, according to this lemma, the map $\dpsi\ni (t,T) \mapsto \psi_f(x,t,T)$ is jointly continuous for every $x\in X$ (due to Assumption \ref{cnd:f}).

We will first show that
\begin{equation}
\label{e:wlim1}
\wlim_{T\to 0} \Delta_f(\cdot,T)=\hlambda Wf-\hlambda f\in C_b(X).
\end{equation}
To do this, let $x\in X$. Since the maps $[0,T]\ni t\mapsto \psi_f(x,t,T)$, $T>0$, are continuous, the mean value theorem for integrals guarantees that, for every $T>0$, the first component on the right-hand side of \eqref{def:delta} is equal to $\psi_f(x,t_x(T),T)$ for some $t_x(T)\in [0,T]$. Consequently, taking into account the continuity of $T\mapsto \psi_f(x,t_x(T),T)$ and \eqref{e:psi_prop2}, we see that
$$\lim_{T\to 0}\frac{1}{T}\int_0^T \psi_f(x,t,T)\,dt=\lim_{T\to 0}\psi_f(x,t_x(T),T) =\psi_f(x,0,0)=\hlambda(x)Wf(x).$$
This, together with the fact that
$$\norma{\frac{1}{T}\int_0^T \psi_f(\cdot,t,T)\,dt }_{\infty}\leq \olambda\norma{f}_{\infty}\quad\text{for all}\quad T>0,$$
resulting from \eqref{e:psi_prop1}, implies that $\wlim_{T\to 0}T^{-1}\int_0^T \psi_f(\cdot,t,T)\,dt=\hlambda Wf$. Further, from l'Hospital's rule it follows that 
$$\lim_{T\to 0} \frac{1-e^{-\Lambda(x,T)}}{T}=\hlambda(x),$$
and \eqref{e:lambda_bounds} ensures that
$$\norma{\frac{1-e^{-\Lambda(\cdot,T)}}{T}}_{\infty}\leq \frac{1-e^{-\olambda T}}{T}\leq \olambda+1\quad\text{for sufficiently small}\quad T>0,$$
whence $\wlim_{T\to 0} (1-e^{-\Lambda(\cdot,T)})/T=\hlambda$. Moreover, \eqref{e:prop_uf} gives $\wlim_{T\to 0} u_f(\cdot,T)/T=0$. Finally, we see $\hlambda Wf-\hlambda f\in C_b(X)$ since $W$ is Feller (by Assumption \ref{cnd:f}) and \hbox{$\hlambda\in C_b(X)$}. This all proves that \eqref{e:wlim1} holds. 

Now, referring to \eqref{e:P_T_app}, we can write
\begin{align*}
\frac{P_T f(x)-f(x)}{T}&=e^{-\Lambda(y,i,T)}\cdot \frac{f(S_i(t,y),i)-f(y,i)}{T}+\frac{1}{T}\int_0^T \psi_f((y,i),t,T)\,dt\\
&\quad-\frac{1-e^{-\Lambda(y,i,T)}}{T} f(y,i)+\frac{u_f((y,i),T)}{T}\quad \text{for}\quad x=(y,i)\in X,\;T>0,
\end{align*}
which, due to \eqref{def:Qt} and \eqref{def:delta}, gives
$$
\frac{P_T f- f}{T}=e^{-\Lambda(\cdot,T)}\cdot\frac{Q_T f -f}{T}+\Delta_f(\cdot,T)\quad\text{for}\quad T>0.
$$
In view of \eqref{e:wlim1} and the fact that $\wlim_{T\to 0} e^{-\Lambda(\cdot,T)}=1$, this observation shows that \hbox{$f\in D(A_P)$} iff $f\in D(A_Q)$, and that \eqref{e:APQ} is satisfied for all $f\in D(A_P)=D(A_Q)$. The proof of the lemma is therefore complete.
\end{proof}

\begin{proof}[Proof of Lemma \ref{lem:main1}]
Obviously, it suffices to show that \eqref{e:inv1} and \eqref{e:inv2} hold.

To this end, let $f\in C_b(X)$. Then, using the flow property, for any $x=(y,i)\in X$ and any $T>0$, we obtain
\begin{align*}
Q_T G f(x)&= Gf(S_i(T,y),i)\\&=\int_0^{\infty} \lambda(S_i(\bar{t}+T,y))\exp\left(-\int_0^{\bar{t}} \lambda(S_i(\bar{h}+T,y))\,d\bar{h}\right)f(S_i(\bar{t}+T,y),i)\,d\bar{t}.
\end{align*}
Making the substitutions $t=\bar{t}+T$ and $h=\bar{h}+T$ therefore gives
\begin{align*}
Q_T G f(x)&=\int_T^{\infty} \lambda(S_i(t,y))\exp\left(-\int_T^{t} \lambda(S_i(h,y))\,dh\right)f(S_i(t,y),i)\,dt.
\end{align*}
Further, since
$$\int_T^{t} \lambda(S_i(h,y))\,dh=L(y,i,t)-L(y,i,T)\quad\text{for}\quad t\geq T,$$
it follows that
\begin{align*}
Q_T G f(x)&=e^{L(y,i,T)}\int_T^{\infty} \lambda(S_i(t,y))e^{-L(y,i,t)}f(S_i(t,y),i)\,dt\\
&=e^{L(y,i,T)}\left(Gf(y,i)- \int_0^T \lambda(S_i(t,y))e^{-L(y,i,t)}f(S_i(t,y),i)\,dt \right).
\end{align*}
Hence, for all $x=(y,i)\in X$ and $T>0$, we have
\begin{align*}
\frac{Q_T(Gf)(x)-Gf(x)}{T}&=\frac{e^{L(y,i,T)}-1}{T}Gf(y,i)\\ 
&\quad-e^{L(y,i,T)}\frac{1}{T}\int_0^T \lambda(S_i(t,y))e^{-L(y,i,t)}f(S_i(t,y),i)\,dt.
\end{align*}
Now, using l'Hospital's rule and taking into account the continuity of the integrand on the right-hand side of this equality, we can conclude that
\begin{align*}
\lim_{T\to 0}\frac{Q_T(Gf)(x)-Gf(x)}{T}&=\lambda(y)Gf(y,i)-\lambda(y)f(y,i)\\\
&=\hlambda(x)(Gf(x)-f(x)) \quad\text{for every}\quad x=(y,i)\in X,
\end{align*}
and $\hlambda(Gf-f)\in C_b(X)$, since $G$ is Feller. Moreover, bearing in mind \eqref{e:lambda_bounds}, we see that
\begin{align*}
\norma{\frac{Q_T(Gf)-Gf}{T}}_{\infty}\leq \left(\frac{e^{\olambda T}-1}{T} + \olambda e^{\olambda T} \right)\norma{f}_{\infty}\leq \left(\olambda+1+\olambda e^{\olambda \delta}\right)\norma{f}
\end{align*}
for every $T\in(0,\delta)$ with sufficiently small $\delta>0$. We have therefore shown that
$$\wlim_{T\to 0} \frac{Q_T(Gf)-Gf}{T}=\hlambda(Gf-f)\in C_b(X)=L_{0,Q}.$$
This obviously means that $Gf\in D(A_Q)$ and $A_Q(Gf)=\hlambda(Gf-f)$,
which immediately implies \eqref{e:inv1}.

What is left is to prove \eqref{e:inv2}. To this end, let $f\in D(A_Q)$ and $x=(y,i)\in X$. Then
\begin{align*}
Gf(x)&=\int_0^{\infty} \lambda(S_i(t,y))e^{-\Lambda(y,i,t)}f(S_i(t,y),i)\,dt=-
\int_0^{\infty} \left(\frac{d^+}{dt}e^{-\Lambda(y,i,t)}\right)Q_tf(y,i)\,dt\\
&=-\left[e^{-\Lambda(y,i,t)} f(S_i(t,y),i)\right]_0^{\infty}
+\int_0^{\infty} e^{-\Lambda(y,i,t)} \left(\frac{d^+}{dt} Q_t f(y,i) \right)\,dt\\
&=f(y,i)+\int_0^{\infty} e^{-\Lambda(y,i,t)} \left(\frac{d^+}{dt} Q_t f(y,i) \right)\,dt.
\end{align*}
According to statement \ref{rem:ii} of Remark \ref{rem:inf}, we have
$$\frac{d^+}{dt} Q_t f(y,i)=Q_t A_Q f(y,i)= A_Qf(S_i(t,y),i)\quad\text{for all}\quad t\geq 0,$$
whence it finally follows that
\begin{align*}
Gf(x)&=f(y,i)+\int_0^{\infty} \lambda(S_i(t,y)) e^{-\Lambda(y,i,t)}\frac{A_Q f(S_i(t,y),i)}{\lambda(S_i(t,y))}\,dt=f(x)+G(A_Q/\hlambda)(x),
\end{align*}
which clearly yields \eqref{e:inv2}. The proof is now complete.
\end{proof}

\section{Application to a particular subclass of PDMPs}\label{sec:application}
In this section we shall use Theorem \ref{thm:main} and \hbox{\cite[Theorem 4.1]{b:czapla_kubieniec}} (cf.~also \hbox{\cite[Theorem 4.1]{b:czapla_erg}} for the case of constant $\lambda$) to provide a set of tractable conditions guaranteeing the existence and uniqueness of a stationary distribution for some particular PDMP, where~$J$ is the transition law of a random iterated function system with an arbitrary family of transformations and state-dependent probabilities of selecting them.

Let $\Theta$ be a topological space, and suppose that $\vartheta$ is a non-negative Borel measure on~$\Theta$. Further, consider an arbitrary set \hbox{$\{w_{\theta}:\,\theta\in \Theta\}$} of continuous transformations from $Y$ to itself and an associated collection $\{p_{\theta}:\, \theta\in\Theta\}$ of continuous maps from $Y$ to $\mathbb{R}_+$ such that, for every $y\in Y$, $\theta\mapsto p_{\theta}(y)$ is a probability density function with respect to $\vartheta$. Moreover, assume that $(y,\theta)\mapsto w_{\theta}(y)$ and $(y,\theta)\mapsto p_{\theta}(y)$ are product measurable.  Given this framework, we are concerned with the kernel $J$ defined by 
\begin{equation}
\label{e:ifs}
J(y,B)=\int_{\Theta}\mathbbm{1}_B(w_{\theta}(y))\,p_{\theta}(y)\,\vartheta(d\theta)\quad\text{for all}\;\; y\in Y\;\text{and}\;\; B\in \mathcal{B}(Y).
\end{equation}
The transition law $P$, specified by \eqref{def:P}, can be then expressed exactly as in \cite{b:czapla_kubieniec}, i.e.,
\begin{align}
\begin{split}
\label{def:P_ifs}
P((y,i), A)&= \int_0^{\infty} \lambda(S_i(t,y)) e^{-\Lambda(y,i,t)} \int_{\Theta} p_{\theta}(S_i(t,y))\\
&\quad \times \sum_{j\in I}\ \pi_{ij}(w_{\theta}(S_i(t,y)))\mathbbm{1}_A(w_{\theta}(S_i(t,y)),j)\,\vartheta(d\theta)\,dt
\end{split}
\end{align}
for any $y\in Y$, $i\in I$ and $A\in\mathcal{B}(X)$. Moreover, note that, in this case, the first coordinate of $\Phi$ satisfies the recursive formula:
$$Y_{n+1}=w_{\eta_{n+1}}(Y(\tau_{n+1}-))=w_{\eta_{n+1}}(S_{\xi_n}(\Delta\tau_{n+1},Y_n))\;\;\text{a.s.}\quad \text{for}\quad n\in\n_0,$$
where $\{\eta_n\}_{n\in\n}$ is a sequence of random variables with values in $\Theta$, such that
$$
\pr(\eta_{n+1}\in D\,|\,S_{\xi_n}(\Delta\tau_{n+1},Y_n)=y)=\int_D p_{\theta}(y)\vartheta(d\theta)\;\;\text{for}\;\; D\in\mathcal{B}(\Theta),\;y\in Y,\;n\in\n_0.
$$

The aforementioned \cite[Theorem 4.1]{b:czapla_kubieniec} (whose proof relies on \cite[Theorem 2.1]{b:kapica}) provides certain conditions under which the transition operator $P$, induced by \eqref{def:P_ifs}, is exponentially ergodic (in the so-called bounded Lipschitz distance) and notably possesses a unique invariant distribution, which belongs to $\mathcal{M}_1^V(X)$. To establish the main result of this section, we shall therefore incorporate several additional requirements, which, along with Assumptions \ref{cnd:a1} and \ref{cnd:a2}, will enable us to apply this theorem and, simultaneously, ensure the fulfillment of Assumption \ref{cnd:f}, involved in Theorem~\ref{thm:main}.

In connection with the above, we first make one more assumption on the semiflows $S_i$:
\begin{assumption}\label{cnd:a3}
There exist a Lebesgue measurable function $\varphi:\mathbb{R}_+\to\mathbb{R}_+$ satisfying 
\begin{equation}
\label{e:phi}
\int_0^{\infty}e^{-\underline{\lambda} t} \varphi(t)\,dt<\infty,
\end{equation} 
and a function $\mathcal{L}:Y\to\mathbb{R}_+$, bounded on bounded sets, such that
$$\rho_Y(S_i(t,y),S_j(t,y))\leq \varphi(t)\mathcal{L}(y)\quad\text{for any}\quad t\geq 0,\;y\in Y,\;i,j\in I.$$
\end{assumption}
Further, we employ the following assumption on the probabilities $\pi_{ij}$, associated with the semiflows switching:
\begin{assumption}\label{cnd:pi}
There exist positive constants $L_{\pi}$ and  $\delta_{\pi}$ such that 
\begin{gather*}
\sum_{j\in I}|\pi_{ij}(u)-\pi_{ij}(v)|\leq L_{\pi}\rho_Y(u,v) \quad \text{for any} \quad u,v \in Y,\; i\in I;\\
\sum_{j\in I}\min\{\pi_{ij}(u),  \pi_{kj}(v)  \}\geq \delta_{\pi} \quad \text{for any} \quad u,v \in Y,\; i,k\in I.
\end{gather*}
\end{assumption}
Finally, let us impose certain conditions on the components of the kernel~$J$, defined by \eqref{e:ifs}.
\begin{assumption} \label{cnd:w1}
There exist $y_*\in Y$ such that
$$
\gamma(y_*):=\sup_{y\in Y}\int_{\Theta} \rho_Y(w_{\theta}(y_*),y_*)p_{\theta}(y)\,\vartheta(d\theta)<\infty.
$$
\end{assumption}
\begin{assumption} \label{cnd:w}
There exist positive constants $L_w$, $L_p>0$ and $\delta_p>0$ such that
\begin{gather}
\label{cnd:w2}\int_{\Theta} \rho_Y(w_{\theta}(u),w_{\theta}(v))\,p_{\theta}(u)\vartheta(d\theta)\leq L_w\rho_Y(u,v)\quad \text{for any}\quad u,v\in Y;\\
\label{cnd:w3}\int_{\Theta} |p_{\theta}(u)-p_{\theta}(v)|\,\vartheta(d\theta)\leq L_p\rho_Y(u,v)\quad\text{for any}\quad u,v\in Y;\\
\nonumber\int_{\Theta(u,v)} \min\{p_{\theta}(u)\wedge p_{\theta}(v)\}\,\vartheta(d\theta)\geq \delta_p\quad\text{for any}\quad u,v\in Y,
\end{gather}
where 
$$
\Theta(u,v):=\{\theta\in \Theta:\,\rho_Y(w_{\theta}(u),w_{\theta}(v))\leq L_w\rho_Y(u,v)\}.$$
\end{assumption}
In addition to that, we will require that the constants $L,\alpha$ in Assumption \ref{cnd:a2} and $L_w$ in Assumption \ref{cnd:w} are interrelated by the inequality
\begin{equation}
\label{e:balance}
LL_w\olambda+\alpha<\ulambda.
\end{equation}

It is worth stressing here that conditions similar in form to those gathered in Assumptions~\ref{cnd:w1} and \ref{cnd:w} (with $L_w<1$) are commonly \hbox{utilized} when examining the existence of invariant measures and stability properties of random iterated function systems; See, e.g., \cite[Proposition 3.1]{b:kapica} or \hbox{\cite[Theorem 3.1]{b:szarek}}.

\begin{remark}\label{rem:usage} In what follows, we shall demonstrate that \cite[Theorem 4.1]{b:czapla_kubieniec}, involving conditions \cite[(A1)-(A6) and (20)]{b:czapla_kubieniec}, remains valid (with almost the same proof) under \hbox{Assumptions} \ref{cnd:a1}, \ref{cnd:a2} and \hbox{\ref{cnd:a3}--\ref{cnd:w}}, provided that the constants $L,\alpha$ and $L_w$ satisfy \eqref{e:balance} (which corresponds to (20) in \cite{b:czapla_kubieniec}), and that  $\lambda$ is Lipschitz continuous. 
\begin{itemize}

\item[(i)]  Hypothesis (A1), which states that, for some $y_*\in Y$,
$$\max_{i\in I}\sup_{y\in Y}\int_0^{\infty} e^{-\ulambda t}\int_{\Theta}\rho_Y(w_{\theta}(S_i(t,y_*)),y_*)p_{\theta}(S_i(t,y))\,\vartheta(d\theta)\,dt<\infty,$$
can be replaced with the conjunction of Assumptions \ref{cnd:a1} and \ref{cnd:w1}. More precisely, the only place where (A1) is used in the proof of \cite[Theorem 4.1]{b:czapla_kubieniec} occurs in Step 1, which aims to show that $P$ enjoys the Lyapunov condition with $V$ given by \eqref{def:V}, i.e., there exist $a\in [0,1)$ and $b\geq 0$ such that
\begin{equation}
\label{e:lapunov_V}
PV(x)\leq a V(x)+b\quad\text{for all}\quad x\in X.
\end{equation}
This condition is achieved there with\vspace{-0.08cm}
\begin{equation}\label{def:a}
a:=\frac{\olambda L_w L}{\ulambda-\alpha},
\end{equation}
which is obviously $<1$ due to inequality \eqref{e:balance}.
However, as will be seen in Lemma~\ref{lem:lapunov_P} (established  below), condition \eqref{e:lapunov_V} can be derived (with the same $a$) using  Assumptions  \ref{cnd:a1}, \ref{cnd:a2}, \ref{cnd:w1}, and property \eqref{cnd:w2} (included in Assumption \ref{cnd:w}).

\item[(ii)] Condition (A2) is equivalent to the conjunction of Assumptions \ref{cnd:a2} and \ref{cnd:a3} with \hbox{$\varphi(t)=t$}. Nevertheless, a simply analysis of the proof of \cite[Theorem 4.1]{b:czapla_kubieniec} shows that it remains valid if $\varphi:\mathbb{R}_+\to\mathbb{R}_+$ is an arbitrary Lebesgue measurable function satisfying~\eqref{e:phi}.

\item[(iii)] Hypotheses (A3), (A5) and (A6) simply coincide with Assumptions \ref{cnd:pi} and \ref{cnd:w}.

\item[(iv)]  Hypothesis (A4) just corresponds to the assumed Lipschitz continuity of $\lambda$.
\end{itemize}
\end{remark}

\begin{remark}
Regarding Remark \ref{rem:usage}(i), the sole reason we employ Assumptions \ref{cnd:a1} and~\ref{cnd:w1} in this paper instead of hypothesis \cite[(A1)]{b:czapla_kubieniec} is to get the conclusion of Proposition~\ref{prop:m1v}(i), which ensures that the invariant measures of $\{P_t\}_{t\in\mathbb{R_+}}$ inherit the property of having finite first moments w.r.t. $V$ from the invariant measures of $P$.

It is worth emphasizing that Assumptions \ref{cnd:a1} and \ref{cnd:w1} do not generally imply \cite[(A1)]{b:czapla_kubieniec}. However, it is easy to check (using the triangle inequality) that such an implication does hold in at least three special cases: first, when all $w_{\theta}$ are Lipschitz continuous (thus strengthening \eqref{cnd:w2}); second, when all the maps $p_{\theta}$ are constant, and third, if there exists a point $y_*\in Y$ such that $S_i(t,y_*)=y_*$ for all $t\geq 0$ and $i\in I$ (in this case Assumptions \ref{cnd:a1} holds trivially).
\end{remark}

The following two lemmas justifies statement (i) of Remark \ref{rem:usage}.

\begin{lemma}\label{lem:lapunov_J}
Suppose that Assumption \ref{cnd:w1} is fulfilled, and that condition \eqref{cnd:w2} holds for some $L_w>0$. Then the operator $J(\cdot)$~on~$B_b(X)$ induced by the kernel specified in \eqref{e:ifs} satisfies \eqref{e:lapunov_J} with $\widetilde{V}:=\rho_Y(\cdot,y_*)$, $\tilde{a}:=L_w$ and $\tilde{b}:=\gamma(y_*)$.
\end{lemma}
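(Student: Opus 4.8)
The plan is to estimate $J\widetilde{V}(y)$ directly from the defining formula \eqref{e:ifs}, split it via the triangle inequality, and bound the two resulting pieces separately using \eqref{cnd:w2} and Assumption~\ref{cnd:w1}. Fix $y\in Y$. Since $\widetilde{V}=\rho_Y(\cdot,y_*)$ is non-negative and Borel measurable, and the maps $(y,\theta)\mapsto w_\theta(y)$ and $(y,\theta)\mapsto p_\theta(y)$ are product measurable (as assumed when introducing \eqref{e:ifs}), the integrand below is measurable and
$$
J\widetilde{V}(y)=\int_\Theta \rho_Y\!\left(w_\theta(y),y_*\right)p_\theta(y)\,\vartheta(d\theta).
$$
Now apply the triangle inequality $\rho_Y(w_\theta(y),y_*)\le \rho_Y(w_\theta(y),w_\theta(y_*))+\rho_Y(w_\theta(y_*),y_*)$ pointwise in $\theta$, and use linearity together with monotonicity of the integral to obtain
$$
J\widetilde{V}(y)\le \int_\Theta \rho_Y\!\left(w_\theta(y),w_\theta(y_*)\right)p_\theta(y)\,\vartheta(d\theta)+\int_\Theta \rho_Y\!\left(w_\theta(y_*),y_*\right)p_\theta(y)\,\vartheta(d\theta).
$$

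For the first summand, since $\theta\mapsto p_\theta(y)$ is a probability density with respect to $\vartheta$, condition \eqref{cnd:w2} applied with $u=y$ and $v=y_*$ yields the bound $L_w\rho_Y(y,y_*)=L_w\widetilde{V}(y)$. For the second summand, Assumption~\ref{cnd:w1} gives, uniformly in $y$,
$$
\int_\Theta \rho_Y\!\left(w_\theta(y_*),y_*\right)p_\theta(y)\,\vartheta(d\theta)\le \sup_{y\in Y}\int_\Theta \rho_Y\!\left(w_\theta(y_*),y_*\right)p_\theta(y)\,\vartheta(d\theta)=\gamma(y_*)<\infty.
$$
Adding the two estimates gives $J\widetilde{V}(y)\le L_w\widetilde{V}(y)+\gamma(y_*)$, which is exactly \eqref{e:lapunov_J} with $\widetilde{V}=\rho_Y(\cdot,y_*)$, $\tilde{a}=L_w$ and $\tilde{b}=\gamma(y_*)$; since $y\in Y$ was arbitrary, the proof is complete.

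There is essentially no real obstacle here: the argument is a one-line triangle inequality followed by term-by-term application of the two hypotheses. The only points worth a remark are the measurability of the integrand (so that $J\widetilde{V}$ is well defined as a Borel function, which is guaranteed by the product-measurability assumptions on $w_\theta$ and $p_\theta$) and the finiteness of $\gamma(y_*)$, which is precisely the content of Assumption~\ref{cnd:w1} and ensures that the right-hand side of \eqref{e:lapunov_J} is finite.
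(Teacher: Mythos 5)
Your proof is correct and takes essentially the same approach as the paper: both apply the triangle inequality to split $\rho_Y(w_\theta(y),y_*)$, then bound the first integral via condition \eqref{cnd:w2} with $u=y$, $v=y_*$, and the second by $\gamma(y_*)$ from Assumption~\ref{cnd:w1}. Your added remarks on measurability and finiteness are harmless but not present (because not needed) in the paper's one-display proof.
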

\begin{proof}
For every $y\in Y$ we have
\begin{align*}
J\widetilde{V}(y)&=\int_{\Theta} \rho_Y(w_{\theta}(y),y_*)p_{\theta}(y)\vartheta(d\theta)\\
&\leq \int_{\Theta} \rho_Y(w_{\theta}(y),w_{\theta}(y_*))p_{\theta}(y)\vartheta(d\theta)+ \int_{\Theta} \rho_Y(w_{\theta}(y_*),y_*)p_{\theta}(y)\vartheta(d\theta)\\
&\leq L_w\rho_Y(y,y_*)+\gamma(y_*)=L_w\widetilde{V}(y)+\gamma(y_*).
\end{align*}
\end{proof}

\begin{lemma}\label{lem:lapunov_P}
Under Assumptions \ref{cnd:a1} and \ref{cnd:a2}, if  \eqref{e:lapunov_J} holds with $\widetilde{V}:=\rho_Y(\cdot,y_*)$ and certain constants $\tilde{a},\tilde{b}\geq 0$, then \eqref{e:lapunov_V} is satisfied with $V$ given by \eqref{def:V} and the constants 
$$a:=\frac{\olambda \tilde{a} L}{\ulambda-\alpha} \quad \text{and} \quad b:=\olambda\left(\tilde{a} \beta(y_*)+ \tilde{b}\ulambda^{-1}\right).$$
In particular, if $J$ is of the form \eqref{e:ifs}, then Assumptions \ref{cnd:a1}, \ref{cnd:a2}, \ref{cnd:w1}  and condition  \eqref{cnd:w2} yield that \eqref{e:lapunov_V} holds with the constant $a$ given by \eqref{def:a}.

\end{lemma}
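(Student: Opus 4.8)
The plan is to exploit the factorization $P=GW$ from Remark~\ref{rem:gw}, which reduces the computation of $PV$ to a one-dimensional estimate along the semiflows, in close analogy with the proof of Proposition~\ref{prop:m1v}(i). First I would note that, since the function $V$ given by \eqref{def:V} satisfies $V(u,j)=\widetilde{V}(u)$ for every $j\in I$ and $\sum_{j\in I}\pi_{ij}(u)=1$, the operator $W$ from \eqref{def:W} acts on $V$ by
$$WV(y,i)=\int_Y\sum_{j\in I}\pi_{ij}(u)\widetilde{V}(u)\,J(y,du)=\int_Y\widetilde{V}(u)\,J(y,du)=J\widetilde{V}(y),$$
so that, recalling \eqref{def:G}, for every $x=(y,i)\in X$,
$$PV(x)=G(WV)(x)=\int_0^{\infty}\lambda(S_i(t,y))e^{-\Lambda(y,i,t)}\,J\widetilde{V}(S_i(t,y))\,dt.$$

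Next I would insert the Lyapunov bound \eqref{e:lapunov_J} for $J\widetilde{V}$ evaluated at $S_i(t,y)$, and then use \eqref{e:lambda_bounds} to replace $\lambda(S_i(t,y))$ by $\olambda$ and $e^{-\Lambda(y,i,t)}$ by $e^{-\ulambda t}$, obtaining
$$PV(x)\leq \olambda\tilde{a}\int_0^{\infty}e^{-\ulambda t}\rho_Y(S_i(t,y),y_*)\,dt+\olambda\tilde{b}\int_0^{\infty}e^{-\ulambda t}\,dt.$$
The last integral equals $\ulambda^{-1}$, which produces the additive term $\olambda\tilde{b}\ulambda^{-1}$. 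For the remaining integral I would apply the triangle inequality $\rho_Y(S_i(t,y),y_*)\leq\rho_Y(S_i(t,y),S_i(t,y_*))+\rho_Y(S_i(t,y_*),y_*)$, bound the first summand by $Le^{\alpha t}\rho_Y(y,y_*)$ via Assumption~\ref{cnd:a2}, integrate $\int_0^{\infty}e^{-(\ulambda-\alpha)t}\,dt=(\ulambda-\alpha)^{-1}$ (finite because $\alpha<\ulambda$), and bound $\int_0^{\infty}e^{-\ulambda t}\rho_Y(S_i(t,y_*),y_*)\,dt$ by $\beta(y_*)$ using Assumption~\ref{cnd:a1} and taking the maximum over $i\in I$. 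Collecting the terms yields exactly $PV(x)\leq aV(x)+b$ with $a=\olambda\tilde{a}L/(\ulambda-\alpha)$ and $b=\olambda(\tilde{a}\beta(y_*)+\tilde{b}\ulambda^{-1})$, i.e.\ \eqref{e:lapunov_V}.

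For the ``in particular'' clause I would simply invoke Lemma~\ref{lem:lapunov_J}: when $J$ has the form \eqref{e:ifs}, Assumption~\ref{cnd:w1} together with condition \eqref{cnd:w2} gives \eqref{e:lapunov_J} with $\tilde{a}=L_w$ and $\tilde{b}=\gamma(y_*)$, so the first part of the lemma yields \eqref{e:lapunov_V} with $a=\olambda L_w L/(\ulambda-\alpha)$, which is precisely the constant in \eqref{def:a}. I do not expect any genuine obstacle here; the only points requiring a little care are the reduction $WV=J\widetilde{V}$ (so that the $I$-coordinate plays no role whatsoever) and the convergence of the exponential integrals, which is exactly where the hypotheses $\inf_Y\lambda>0$, $\alpha<\ulambda$, and $\beta(y_*)<\infty$ come into play.
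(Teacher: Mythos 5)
Your proposal is correct and follows essentially the same route as the paper: both reduce $PV(x)$ to the one-dimensional integral $\int_0^{\infty}\lambda(S_i(t,y))e^{-\Lambda(y,i,t)}J\widetilde{V}(S_i(t,y))\,dt$, insert \eqref{e:lapunov_J}, use the triangle inequality with Assumption~\ref{cnd:a2}, bound $\lambda$ and $e^{-\Lambda}$ via \eqref{e:lambda_bounds}, and invoke Assumption~\ref{cnd:a1} and Lemma~\ref{lem:lapunov_J} to finish. The only cosmetic difference is that you make the reduction explicit through $P=GW$ and $WV=J\widetilde{V}$, whereas the paper writes the integral formula for $PV(x)$ directly.
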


\begin{proof}
Clearly, $b<\infty$ by Assumption \ref{cnd:a1}. Let $x=(y,i)\in X$. Using condition \eqref{e:lapunov_J} and Assumption~\ref{cnd:a2}, we obtain
\begin{align*}
J\widetilde{V}(S_i(t,y))&\leq \tilde{a} \rho_Y(S_i(t,y),y_*)+\tilde{b}\\
& \leq \tilde{a} \left(\rho_Y(S_i(t,y),S_i(t,y_*))+\rho_Y(S_i(t,y_*),y_*) \right)+\tilde{b}\\
&\leq \tilde{a} Le^{\alpha t} \rho_Y(y,y_*)+\tilde{a}\rho_Y(S_i(t,y_*),y_*)+\tilde{b}\quad\text{for any}\quad t\in\mathbb{R}_+.
\end{align*}
This, in turn, gives
\begin{align*}
PV(x)&=\int_0^{\infty} \lambda(S_i(t,y))e^{-\Lambda(y,i,t)} J\widetilde{V}(S_i(t,y))\,dt\\
&\leq \olambda \int_0^{\infty} e^{-\ulambda t}\left(\tilde{a} Le^{\alpha t} \rho_Y(y,y_*)+\tilde{a}\rho_Y(S_i(t,y_*),y_*)+\tilde{b} \right)\,dt\\
&=\olambda\tilde{a}L\left(\int_0^{\infty} e^{-(\ulambda-\alpha)t}\,dt\right)\widetilde{V}(y)+\olambda\left(\tilde{a}\int_0^{\infty} e^{-\ulambda t}\rho_Y(S_i(t,y_*),y_*)\,dt+ \tilde{b}\ulambda^{-1} \right)\\
&\leq \frac{\olambda \tilde{a} L}{\ulambda-\alpha} V(x)+\olambda\left(\tilde{a}\beta(y_*)+\tilde{b}\ulambda^{-1} \right).
\end{align*}
The second part of the assertion follows directly from Lemma \ref{lem:lapunov_J}.
\end{proof}

What is now left to establish the main theorem of this section is to show that the model under consideration fulfills Assumption \ref{cnd:f}.

\begin{lemma}\label{lem:f}
If condition \eqref{cnd:w3} holds for some $L_p>0$, then the kernel $J$ defined by \eqref{e:ifs} fulfills Assumption \ref{cnd:f}. 
\end{lemma}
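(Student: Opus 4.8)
The plan is to work with the explicit action of $J$ on functions. For $g\in C_b(Y\times\mathbb{R}_+)$ and fixed $t\in\mathbb{R}_+$, formula \eqref{e:ifs} yields
\[
Jg(\cdot,t)(y)=\int_{\Theta} g(w_\theta(y),t)\,p_\theta(y)\,\vartheta(d\theta)\qquad\text{for }y\in Y ,
\]
and the assumed product measurability of $(y,\theta)\mapsto w_\theta(y)$ and $(y,\theta)\mapsto p_\theta(y)$, combined with the Borel measurability of $g$, guarantees that the integrand is $\vartheta$-measurable in $\theta$, so that $Jg(\cdot,t)$ is a well-defined Borel function bounded by $\norma{g}_{\infty}$. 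Since $Y\times\mathbb{R}_+$ is metrizable, joint continuity of $(y,t)\mapsto Jg(\cdot,t)(y)$ amounts to sequential continuity; hence I would fix an arbitrary sequence $(y_n,t_n)\to(y_0,t_0)$ in $Y\times\mathbb{R}_+$ and aim to prove $Jg(\cdot,t_n)(y_n)\to Jg(\cdot,t_0)(y_0)$.

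The core of the argument is the decomposition
\begin{align*}
Jg(\cdot,t_n)(y_n)-Jg(\cdot,t_0)(y_0)
&=\int_{\Theta}\bigl[g(w_\theta(y_n),t_n)-g(w_\theta(y_0),t_0)\bigr]\,p_\theta(y_0)\,\vartheta(d\theta)\\
&\quad+\int_{\Theta}\bigl[g(w_\theta(y_n),t_n)-g(w_\theta(y_0),t_0)\bigr]\,\bigl[p_\theta(y_n)-p_\theta(y_0)\bigr]\,\vartheta(d\theta)\\
&\quad+\int_{\Theta} g(w_\theta(y_0),t_0)\,\bigl[p_\theta(y_n)-p_\theta(y_0)\bigr]\,\vartheta(d\theta).
\end{align*}
For the first integral I would invoke the Lebesgue dominated convergence theorem: for each fixed $\theta$, continuity of $w_\theta$ together with continuity of $g$ gives $g(w_\theta(y_n),t_n)\to g(w_\theta(y_0),t_0)$, while the integrand is dominated by $2\norma{g}_{\infty}\,p_\theta(y_0)$, which is $\vartheta$-integrable (its integral equals $2\norma{g}_{\infty}$). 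For the second and third integrals, the elementary bounds $2\norma{g}_{\infty}\int_{\Theta}|p_\theta(y_n)-p_\theta(y_0)|\,\vartheta(d\theta)$ and $\norma{g}_{\infty}\int_{\Theta}|p_\theta(y_n)-p_\theta(y_0)|\,\vartheta(d\theta)$, respectively, together with hypothesis \eqref{cnd:w3}, show that both are at most a constant multiple of $\rho_Y(y_n,y_0)$, hence tend to $0$. Adding up the three estimates gives the claim.

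The principal obstacle --- and the precise reason why condition \eqref{cnd:w3} is needed rather than mere pointwise continuity of the maps $p_\theta$ --- is that the measure $p_\theta(y_n)\,\vartheta(d\theta)$ appearing in $Jg(\cdot,t_n)(y_n)$ itself moves with $n$, so one cannot apply dominated convergence against a single fixed integrable majorant before first replacing $p_\theta(y_n)$ by $p_\theta(y_0)$; the Lipschitz-type estimate \eqref{cnd:w3} is exactly what bounds the error committed in this replacement (a quantitative form of $L^1(\vartheta)$-continuity of $y\mapsto p_{\cdot}(y)$). Apart from that, the proof uses nothing about the topology of $\Theta$: only that each $w_\theta$ is continuous, that $g\in C_b(Y\times\mathbb{R}_+)$, and that each $p_\theta(\cdot)$ is a probability density with respect to $\vartheta$.
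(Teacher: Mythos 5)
Your proof is correct and follows essentially the same strategy as the paper: freeze the density at $y_0$, apply dominated convergence to the $g(w_\theta(\cdot),\cdot)$ part, and control the change in density via the $L^1(\vartheta)$-Lipschitz estimate \eqref{cnd:w3}. The only cosmetic difference is that you use a three-term product decomposition where the paper writes $a_0b_0-ab=(a_0-a)b_0+a(b_0-b)$ and thus handles your second and third integrals in a single term bounded by $\norma{g}_{\infty}L_p\,\rho_Y(y_0,y)$; your version is slightly redundant but equally valid.
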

\begin{proof}
Let $g\in C_b(Y\times\mathbb{R}_+)$ and fix \hbox{$(y_0,t_0)\in Y\times\mathbb{R}_+$}. Then, for any $y\in Y$ and $t\geq 0$, we can write
\begin{align*}
|Jg(\cdot,t_0)(y_0) - Jg(\cdot,t)(y)|&\leq\int_{\Theta}|g(w_{\theta}(y_0),t_0)p_{\theta}(y_0)-g(w_{\theta}(y),t)p_{\theta}(y)|\,\vartheta(d\theta)\\
&\leq\int_{\Theta}|g(w_{\theta}(y_0),t_0)-g(w_{\theta}(y),t)|p_{\theta}(y_0)\,\vartheta(d\theta)\\
&\quad+ \int_{\Theta} |g(w_{\theta}(y),t)|\cdot|p_{\theta}(y_0)-p_{\theta}(y)|\,\vartheta(d\theta).
\end{align*}
It now suffices to observe that both terms on the right-hand side of this estimation tend to~$0$ as $(t,y)\to (t_0,y_0)$. The convergence of the first term follows from the Lebesgue dominated convergence theorem, since $w_{\theta}$ and $g$ are continuous (and the latter is also bounded). The second one converges by condition \eqref{cnd:w3} and the boundedness of $g$, which enables estimating it from above by $\norma{g}_{\infty}L_p\rho_Y(y_0,y)$.
\end{proof}

\begin{theorem}\label{thm:app}
Let $J$ be of the form \eqref{e:ifs}. Further, suppose that Assumptions \ref{cnd:a1},  \ref{cnd:a2} and \ref{cnd:a3}--\ref{cnd:w} are fulfilled, and that the constants $\alpha, L$ and $L_w$ can be chosen so that \eqref{e:balance} holds. Moreover, assume that the function $\lambda$ is Lipschitz continuous. Then the transition semigroup $\{P_t\}_{t\in\mathbb{R}_+}$ of the process $\Psi$, determined by \eqref{def:pdmp}, has a unique invariant probability measure, which belongs to $\mathcal{M}_1^V(X)$ with $V$ given by \eqref{def:V}.
\end{theorem}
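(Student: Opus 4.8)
The plan is to deduce the statement from three results already in hand: the exponential ergodicity theorem \cite[Theorem 4.1]{b:czapla_kubieniec} (in the slightly broadened form spelled out in Remark \ref{rem:usage}), the one-to-one correspondence in Corollary \ref{cor:main}, and the moment-transfer assertion in Proposition \ref{prop:m1v}(i).

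First I would verify that, under the present hypotheses, the transition operator $P$ given by \eqref{def:P_ifs} meets all the requirements of \cite[Theorem 4.1]{b:czapla_kubieniec} as amended in Remark \ref{rem:usage}. The matching goes as follows: hypothesis (A2) is the conjunction of Assumptions \ref{cnd:a2} and \ref{cnd:a3} (using that a general weight $\varphi$ satisfying \eqref{e:phi} is admissible, per Remark \ref{rem:usage}(ii)); hypotheses (A3), (A5), (A6) coincide with Assumptions \ref{cnd:pi} and \ref{cnd:w}; (A4) is the assumed Lipschitz continuity of $\lambda$; and inequality (20) of \cite{b:czapla_kubieniec} is precisely \eqref{e:balance}. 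The only point needing care is hypothesis (A1), whose sole role in \cite{b:czapla_kubieniec} is to produce the Lyapunov inequality \eqref{e:lapunov_V}. Here I would instead invoke Lemma \ref{lem:lapunov_J} (which, from Assumption \ref{cnd:w1} and property \eqref{cnd:w2}, yields \eqref{e:lapunov_J} with $\widetilde{V}=\rho_Y(\cdot,y_*)$ and $\tilde a=L_w$), followed by Lemma \ref{lem:lapunov_P} together with Assumptions \ref{cnd:a1} and \ref{cnd:a2}, to obtain \eqref{e:lapunov_V} with $a=\olambda L_w L/(\ulambda-\alpha)$; this $a$ is strictly less than $1$ exactly because of \eqref{e:balance}. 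Hence \cite[Theorem 4.1]{b:czapla_kubieniec} applies, and $P$ has a unique invariant probability measure $\mu_*^{\Phi}$, which moreover belongs to $\mathcal{M}_1^V(X)$.

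Next, since condition \eqref{cnd:w3} is part of Assumption \ref{cnd:w}, Lemma \ref{lem:f} guarantees that $J$ satisfies Assumption \ref{cnd:f}. Corollary \ref{cor:main} then applies and shows that $\{P_t\}_{t\in\mathbb{R}_+}$ admits a unique invariant probability measure, call it $\mu_*^{\Psi}$, because $P$ does; by Theorem \ref{thm:main}(a) this measure can be written as $\mu_*^{\Psi}=\mu_*^{\Phi}\widetilde{G}/\mu_*^{\Phi}\widetilde{G}(X)$. Finally, since $\mu_*^{\Phi}\in\mathcal{M}_1^V(X)$ and Assumptions \ref{cnd:a1}, \ref{cnd:a2} hold, Proposition \ref{prop:m1v}(i) applied to this representation gives $\mu_*^{\Psi}\in\mathcal{M}_1^V(X)$, which completes the argument.

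The main obstacle is the bookkeeping in the first step: one must carefully check each present assumption against the hypotheses (A1)--(A6) and condition (20) of \cite{b:czapla_kubieniec}, and re-inspect the proof there to confirm that substituting the Lyapunov condition \eqref{e:lapunov_V} (derived via Lemmas \ref{lem:lapunov_J}--\ref{lem:lapunov_P}) for (A1), and permitting a general weight $\varphi$ in (A2), leaves the remaining reasoning intact — which is precisely what Remark \ref{rem:usage} asserts. Beyond that verification, the proof is a short chain of invocations of already-established facts.
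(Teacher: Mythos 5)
Your proposal is correct and follows essentially the same route as the paper's proof: invoke \cite[Theorem 4.1]{b:czapla_kubieniec} via Remark \ref{rem:usage} (with Lemmas \ref{lem:lapunov_J}--\ref{lem:lapunov_P} supplying the Lyapunov inequality in place of (A1)) to get the unique invariant measure of $P$ in $\mathcal{M}_1^V(X)$, then use Lemma \ref{lem:f} to verify Assumption \ref{cnd:f}, then transfer existence/uniqueness to $\{P_t\}_{t\in\mathbb{R}_+}$ via Theorem \ref{thm:main}/Corollary \ref{cor:main}, and finally apply Proposition \ref{prop:m1v}(i) for the finite first moment. The only difference is that you unpack the bookkeeping in Remark \ref{rem:usage} more explicitly, which the paper leaves compressed.
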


\begin{proof}
First of all, according to Remark \ref{rem:usage}, we can apply \hbox{\cite[Theorem 4.1]{b:czapla_kubieniec}} to conclude that the transition operator $P$ of the chain $\Phi$, specified by \eqref{def:P_ifs}, possesses a~unique invariant probability measure~$\mu_*^{\Phi}$, which is a member of $\mathcal{M}_1^V(X)$. On the other hand, Lemma \ref{lem:f} guarantees that the kernel~$J$ fulfills Assumption \ref{cnd:f}. Consequently, it now follows from Theorem~\ref{thm:main} \hbox{(cf. also Corollary~\ref{cor:main})} that  $\mu_*^{\Psi}$ given by \eqref{e:m_psi} is a unique invariant probability measure of the semigroup $\{P_t\}_{t\in\mathbb{R}_+}$. Finally, Proposition \ref{prop:m1v}(i) yields that $\mu_*^{\Psi}\in\mathcal{M}_1^V(X)$, which ends the proof.
\end{proof}

\bibliography{ReferencesDatabase}

\section*{Statements and Declarations}

\textbf{Funding} The author declares that no funds, grants, or other support were received during the preparation of this manuscript.\bigskip

\noindent
\textbf{Competing Interests} The author has no relevant financial or non-financial interests to disclose.\bigskip

\noindent
\textbf{Conflict of Interest} The author declares that he has no conflict of interest.

\section*{Data Availability} 
Data sharing not applicable to this article as no datasets were generated or analysed during the current study.
\end{document}